\documentclass[12pt,oneside,american]{amsart}
\usepackage{tgschola}

\usepackage[T1]{fontenc}
\usepackage[latin9]{inputenc}
\usepackage{geometry}
\geometry{verbose,tmargin=2cm,bmargin=2cm,lmargin=2cm,rmargin=2cm}
\usepackage{color}
\usepackage{babel}
\usepackage{prettyref}
\usepackage{units}
\usepackage{mathrsfs}
\usepackage{mathtools}
\usepackage{amstext}
\usepackage{amsthm}
\usepackage{amssymb}
\usepackage{stackrel}
\usepackage{setspace}
\onehalfspacing
\usepackage[unicode=true,pdfusetitle,
 bookmarks=true,bookmarksnumbered=false,bookmarksopen=false,
 breaklinks=false,pdfborder={0 0 1},backref=false,colorlinks=true]
 {hyperref}
\usepackage{breakurl}

\makeatletter

\newcommand{\noun}[1]{\textsc{#1}}

\numberwithin{equation}{section}
\numberwithin{figure}{section}
\numberwithin{table}{section}
\theoremstyle{plain}
\newtheorem{thm}{\protect\theoremname}[section]
\theoremstyle{plain}
\newtheorem*{thm*}{\protect\theoremname}
\theoremstyle{definition}
\newtheorem*{defn*}{\protect\definitionname}
\theoremstyle{remark}
\newtheorem{rem}[thm]{\protect\remarkname}
\theoremstyle{plain}
\newtheorem{prop}[thm]{\protect\propositionname}
\theoremstyle{plain}
\newtheorem*{lem*}{\protect\lemmaname}
\theoremstyle{plain}
\newtheorem{cor}[thm]{\protect\corollaryname}

\usepackage{thmtools}

\makeatother

\providecommand{\corollaryname}{Corollary}
\providecommand{\definitionname}{Definition}
\providecommand{\lemmaname}{Lemma}
\providecommand{\propositionname}{Proposition}
\providecommand{\remarkname}{Remark}
\providecommand{\theoremname}{Theorem}

\begin{document}

\title{On the Dirichlet-to-Neumann operator for the Connection Laplacian}

\author{Ravil Gabdurakhmanov}

\address{School of Mathematics, University of Leeds, Leeds LS2 9JT, UK}

\email{mmrg@leeds.ac.uk}

\begin{abstract}
We study the relationship between the symbol of the Dirichlet-to-Neumann
operator associated with a connection Laplacian, and the geometry
on and near the boundary. As a consequence, we show that the geometric
data on the boundary, and when the dimension of the base is greater
than two all corresponding normal derivatives, are determined by the
symbol.
\end{abstract}

\maketitle

\section{Introduction}

Let $\left(N,g\right)$ be a compact connected Riemannian manifold
with non-emtpy boundary $\partial N$, and let $E\rightarrow N$ be
a Euclidean vector bundle endowed with a compatible connection $\nabla^{E}$.
Consider the connection Laplacian $\triangle^{E}$ associated with
the connection $\nabla^{E}$. It is a natural generalization of the
Laplace-Beltrami operator. We define the corresponding Dirichlet-to-Neumann
(DtN) operator $\Lambda_{g,\nabla^{E}}$ by sending a section $\sigma$
on the boundary $\partial N$ to the outward normal covariant derivative
of its harmonic extension.

In this paper we study the DtN operator $\Lambda_{g,\nabla^{E}}$
as a pseudodifferential operator on the boundary. We follow the strategy
in the paper \cite{key-6} by Lee and Uhlmann for the Laplace-Beltrami
operator. They showed that the Riemannian metric on the boundary can
be recovered from a given DtN operator. In addition, if the dimension
of the manifold is greater than $2$, they showed that all the normal
derivatives of the metric can be recovered as well. This result was
used in \cite{key-6} to recover a Riemannian manifold from the DtN
operator under some assumptions on the geometry and topology of a
manifold, and subsequently in \cite{key-7,key-5} under assumptions
on the geometry only. This method also appears in the work of Ceki\'{c}
\cite{key-1} on Calderon's problem for Yang-Mills connections.

It is well known that a Riemannian metric on a manifold can be recovered
from the Laplacian. This can be done by considering the principal
symbol of the Laplacian which is equal to $\left|\xi\right|_{g\left(x\right)}^{2}$,
where $\xi\in T_{x}^{*}N$, $x\in N$. The Dirichlet-to-Neumann operator
is a classical elliptic pseudodifferential operator of order one on
the boundary. Therefore, it is natural to use the same idea for the
recovery of the geometric data on the boundary from the DtN operator.
There is a local factorization of the Laplacian into the composition
of two operators near the boundary which establishes the relationship
between the symbols of the DtN operator and Laplacian. In particular,
it turns out that the principal symbol of the DtN operator is the
(minus) square root of the principal symbol of the boundary Laplacian.
Therefore, the principal symbol of the DtN operator is equal to $-\left|\xi\right|_{\left.g\right|_{\partial N}}$,
and it is straightforward to determine the metric from it. The rest
part of the symbol is expressed in terms of the local geometric data
in a more sophisticated way. By analyzing these expressions we are
able to recover the geometric data on the boundary from the full symbol
of the DtN operator. More precisely, we prove the following result.
\begin{thm}
\label{thm:1}Suppose $dim\,N=n\ge3$. Let $\left(x^{1},\dots,x^{n-1}\right)$
be any local coordinates for an open set $W\subset$$\partial N$
and $\left(\epsilon_{1},\dots,\epsilon_{r}\right)$ any local frame
of $E$ over $W$, and let $\left\{ \lambda_{j},\,j\le1\right\} $
be the full symbol of the DtN operator $\Lambda$ in these coordinates
and local frame. For any $p\in W$, the full Taylor series of $g$
and $\nabla^{E}$ at $p$ in boundary normal coordinates and boundary
normal frame is given by an explicit formula in terms of the matrix
functions $\left\{ \lambda_{j}\right\} $ and their tangential derivatives
at $p$.
\end{thm}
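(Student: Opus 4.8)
The plan is to transplant the factorization argument of Lee and Uhlmann \cite{key-6} to the bundle setting; the one genuinely new feature is that every symbol is now $\mathrm{End}(E)$-valued, so that at each stage two pieces of geometric data — the metric and the connection — must be disentangled from a single matrix-valued expression. First, fix $p\in W$. After an explicit change of frame over $W$ we may assume $(\epsilon_{1},\dots,\epsilon_{r})$ is orthonormal, and we extend it to the boundary normal frame by $\nabla^{E}$-parallel transport along the unit-speed geodesics normal to $\partial N$; likewise we take $(x^{1},\dots,x^{n-1})$ to be the restriction to $W$ of boundary normal coordinates $(x',x^{n})$. In this gauge $g=(dx^{n})^{2}+g_{\alpha\beta}(x',x^{n})\,dx^{\alpha}dx^{\beta}$ (Greek indices ranging over $1,\dots,n-1$), the connection form $\omega=\omega_{\alpha}\,dx^{\alpha}$ is $\mathfrak{so}(r)$-valued with vanishing $dx^{n}$-component, and
\[
\triangle^{E}=\partial_{n}^{2}+E\,\partial_{n}+\triangle^{E}_{x^{n}},\qquad E=\tfrac{1}{2}\,\partial_{n}\log\det g,
\]
where, for each fixed $x^{n}$, $\triangle^{E}_{x^{n}}$ is the connection Laplacian of the slice: a second-order elliptic operator in $x'$ whose $\mathrm{End}(E)$-valued coefficients are universal polynomial expressions in $g_{\alpha\beta}$, $g^{\alpha\beta}$, $\omega_{\alpha}$ and their tangential derivatives at $(x',x^{n})$. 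Because the normal jets $\partial_{n}^{k}g_{\alpha\beta}\big|_{x^{n}=0}$ and $\partial_{n}^{k}\omega_{\alpha}\big|_{x^{n}=0}$ are precisely the Taylor coefficients of $g$ and $\nabla^{E}$ at $p$ in this gauge, the theorem reduces to determining their tangential jets at $p$ from $\{\lambda_{j}\}$.

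\emph{Factorization and the symbol of $\Lambda$.} Following \cite{key-6}, one constructs a one-parameter family $B=B(x^{n})$ of first-order classical pseudodifferential operators on $W$ with $\mathrm{End}(E)$-valued full symbol $b\sim\sum_{j\ge0}b_{1-j}(x',x^{n},\xi')$ such that
\[
\triangle^{E}=\bigl(\partial_{n}+E-B(x^{n})\bigr)\bigl(\partial_{n}+B(x^{n})\bigr)\pmod{\Psi^{-\infty}}.
\]
Writing $q\sim q_{2}+q_{1}+q_{0}$ for the tangential symbol of $\triangle^{E}_{x^{n}}$ and comparing homogeneous components in the resulting symbol identity $\partial_{n}b+E\,b-b\#b=q$ yields $b_{1}=\left|\xi'\right|_{g}\mathrm{Id}$ (the root giving the normally decaying solution) and, for every $j\ge1$, the recursion
\[
2\left|\xi'\right|_{g}\,b_{1-j}=\partial_{n}b_{2-j}+E\,b_{2-j}-q_{2-j}-\!\!\!\sum_{\substack{i+i'+|\alpha|=j\\ i,\,i'<j}}\!\!\!\frac{1}{\alpha!}\,\partial_{\xi'}^{\alpha}b_{1-i}\cdot D_{x'}^{\alpha}b_{1-i'},
\]
with the conventions $b_{1}=\left|\xi'\right|_{g}\mathrm{Id}$ and $q_{2-j}=0$ for $j\ge3$. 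Comparing the harmonic extension of a boundary section with the factor $\partial_{n}+B$ shows, as in \cite{key-6}, that $\Lambda=-B\big|_{x^{n}=0}$ modulo $\Psi^{-\infty}$, so $\lambda_{1-j}=-b_{1-j}(x',0,\xi')$ for all $j\ge0$, in particular $\lambda_{1}=-\left|\xi'\right|_{g}\mathrm{Id}$.

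\emph{Recovery by induction on $j$.} From $\lambda_{1}$ one reads off the quadratic form $\xi'\mapsto\left|\xi'\right|_{g(x',0)}^{2}$ and hence $g_{\alpha\beta}(x',0)$, with all tangential jets at $p$. Assume inductively that the tangential jets at $p$ of $\partial_{n}^{m}g_{\alpha\beta}\big|_{0}$ for $m\le j-1$ and of $\partial_{n}^{m}\omega_{\alpha}\big|_{0}$ for $m\le j-2$ are known. In the recursion for $b_{1-j}$, every term evaluated at $x^{n}=0$ is then explicitly expressible through this data except for the contributions that carry one additional normal derivative. Tracking the normal order shows the latter are: a term $c\bigl(\left|\xi'\right|_{g}\bigr)\,\bigl(\partial_{n}^{j}g^{\alpha\beta}\big|_{0}\bigr)\xi_{\alpha}\xi_{\beta}\,\mathrm{Id}$ with $c\neq0$, coming from $\partial_{n}b_{2-j}$ and $\partial_{n}E$, which is $\xi'$-even and $\mathbb{R}\,\mathrm{Id}$-valued; and a term $c'\bigl(\left|\xi'\right|_{g}\bigr)\,g^{\alpha\beta}\bigl(\partial_{n}^{j-1}\omega_{\alpha}\big|_{0}\bigr)\xi_{\beta}$ with $c'\neq0$, coming from $\partial_{n}^{j-1}$ of the $\xi'$-linear part $2i\,g^{\alpha\beta}\omega_{\alpha}\xi_{\beta}$ of $q$, which is $\xi'$-odd and $\mathfrak{so}(r)$-valued, together with further terms that are affine in $\partial_{n}^{j-1}\omega_{\alpha}\big|_{0}$ but of strictly lower degree in $\xi'$. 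Hence the trace-free part of the $\xi'$-odd part of $\lambda_{1-j}$ determines $\partial_{n}^{j-1}\omega_{\alpha}\big|_{0}$ (using that $g_{\alpha\beta}(x',0)$ is known), and then — substituting this back — the $\xi'$-even part of $\lambda_{1-j}$ determines the symmetric tensor $\partial_{n}^{j}g_{\alpha\beta}\big|_{0}$. Differentiating these identities tangentially at $p$ closes the induction, and unwinding the gauge above (a tautology, since boundary normal coordinates and the boundary normal frame were set up so that these normal jets are the Taylor coefficients of $g$ and $\nabla^{E}$) gives the asserted explicit formula.

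\emph{Main obstacle.} As in \cite{key-6}, the heart of the matter is the non-degeneracy invoked above: one must verify from the symbol recursion that $c$ and $c'$ are indeed nowhere zero and that the new data $\partial_{n}^{j}g_{\alpha\beta}\big|_{0}$ and $\partial_{n}^{j-1}\omega_{\alpha}\big|_{0}$ occupy separated slots — here the $\xi'$-parity together with the $\mathbb{R}\,\mathrm{Id}$ versus trace-free splitting of $\mathrm{End}(E)$ — so that they can be disentangled from each other and from all lower-order data. The new difficulty relative to the scalar case is bookkeeping in the noncommutative symbol calculus: one must track precisely where $\omega_{\alpha}$, its derivatives, the curvature $F=d\omega+\omega\wedge\omega$, and products such as $\omega_{\alpha}\omega_{\beta}$ land in the expansion of $b$, and check that these endomorphism-valued terms never conspire to annihilate the leading contribution of $\partial_{n}^{j}g$ or of $\partial_{n}^{j-1}\omega$, nor to couple the two irrecoverably. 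Carrying out the cases of $\lambda_{0}$ and $\lambda_{-1}$ explicitly both pins down the signs and normalizations and displays the pattern that the induction propagates.
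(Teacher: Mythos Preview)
Your overall strategy is exactly the one the paper uses: the Lee--Uhlmann factorization in boundary normal gauge, identification of $\Lambda$ with the boundary value of the first-order factor, and an induction on the homogeneity degree that separates the connection contribution by its odd parity in $\xi'$. The separation of $\omega$ from $g$ via $\xi'$-parity is correct and matches the paper (the extra $\mathbb{R}\,\mathrm{Id}$ versus trace-free splitting of $\mathrm{End}(E)$ you invoke is not needed, but does no harm).

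There is, however, a real gap in the metric step, and it is precisely the place where the hypothesis $n\ge 3$ enters --- a hypothesis your argument never uses. You assert that the $\xi'$-even scalar part of $\lambda_{1-j}$, modulo lower data, equals $c(|\xi'|_g)\,(\partial_n^{j}g^{\alpha\beta})\,\xi_\alpha\xi_\beta$ with $c\neq 0$. This is not what the recursion produces. Already at $j=1$ the two contributions you name, from $\partial_n b_1$ and from $E\,b_1$, give respectively $\tfrac{1}{2|\xi'|}h^{\alpha\beta}\xi_\alpha\xi_\beta$ and $-\tfrac{1}{2}(g_{\gamma\delta}h^{\gamma\delta})|\xi'|$ with $h^{\alpha\beta}=\partial_n g^{\alpha\beta}$; together they yield $\tfrac{1}{2|\xi'|}\kappa^{\alpha\beta}\xi_\alpha\xi_\beta$ where $\kappa^{\alpha\beta}=h^{\alpha\beta}-(g_{\gamma\delta}h^{\gamma\delta})g^{\alpha\beta}$, the \emph{trace-adjusted} tensor. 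The same pattern propagates: at every step only $\partial_n^{j-1}\kappa^{\alpha\beta}$ is visible, not $\partial_n^{j}g^{\alpha\beta}$ itself. The linear map $h\mapsto h-(\mathrm{tr}_g h)\,g$ on symmetric $2$-tensors over an $(n-1)$-dimensional space has trace $g_{\alpha\beta}\kappa^{\alpha\beta}=(2-n)\,g_{\alpha\beta}h^{\alpha\beta}$, hence is invertible exactly when $n\ge 3$, with inverse $h^{\alpha\beta}=\kappa^{\alpha\beta}+\tfrac{1}{2-n}(g_{\gamma\delta}\kappa^{\gamma\delta})g^{\alpha\beta}$. In dimension two the map kills the trace and $\partial_n g$ cannot be recovered --- this is precisely why only the weaker Theorem~\ref{thm:2} holds on surfaces. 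So the ``non-degeneracy'' you flag in your \emph{Main obstacle} is not the vanishing of a scalar $c$ but the invertibility of this tensorial map, and your proof as written would apply verbatim when $n=2$, where the conclusion is false. To fix the argument, replace the claimed form of the even part by the $\kappa$-expression and invoke the inversion formula above at each inductive step; this is where, and only where, $n\ge 3$ is used.
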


On surfaces, the DtN operator naturally scales under the conformal
changes of a metric. As a consequence, we cannot recover the normal
derivatives of a metric and a connection at the boundary. So the result
in this case is a bit weaker.
\begin{thm}
\label{thm:2}Let $\left(N,g\right)$ be a Riemannian surface. Let
$\left(x^{1}\right)$ be any local coordinate for an open set $W\subset$$\partial N$
and $\left(\epsilon_{1},\dots,\epsilon_{r}\right)$ any local frame
of $E$ over $W$. Let $\left\{ \lambda_{j},\,j\le1\right\} $ be
the full symbol of the DtN operator $\Lambda$ in these coordinate
and local frame. Then for any $p\in W$ the metric $g$ and the connection
$\nabla^{E}$ at $p$ in these coordinate and frame is given by an
explicit formula in terms of the matrix functions $\left\{ \lambda_{j}\right\} $
and their tangential derivatives at $p$.
\end{thm}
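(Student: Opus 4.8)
The plan is to adapt to dimension two the boundary factorisation of $\triangle^{E}$ underlying the proof of Theorem~\ref{thm:1}. I would first restrict to a collar $\partial N\times[0,T)$ and introduce boundary normal coordinates $(x^{1},x^{2})$, with $x^{2}$ the $g$-distance to $\partial N$, together with the boundary normal frame obtained by parallel transporting $(\epsilon_{1},\dots,\epsilon_{r})$ along the inward normal geodesics; these extend the given boundary coordinate and frame, and it is their $0$-jet at $p$, equivalently the induced metric $g|_{\partial N}$ and the pulled-back connection on $E|_{\partial N}$ at $p$, that will be recovered. In these coordinates $g=(dx^{2})^{2}+g_{11}(x^{1},x^{2})(dx^{1})^{2}$, the normal component of the connection matrix vanishes identically, $A_{2}\equiv0$, and near $\partial N$ one has $\triangle^{E}=-\bigl(\partial_{2}^{2}+\alpha(x)\,\partial_{2}+\beta(x,\partial_{1})\bigr)$ with $\alpha$ a zeroth-order and $\beta$ a second-order tangential differential operator, whose full symbols I would write out explicitly in terms of $g_{11}$, $A_{1}$ and their first derivatives; in particular the degree-one part of the symbol of $\triangle^{E}$ is linear in $A_{1}$, together with scalar terms built from the metric.

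Next I would factor $\triangle^{E}$, modulo a smoothing operator, as a product of two first-order operators in the normal variable, the interior factor being $\partial_{2}-B$ for a tangential pseudodifferential operator $B$ of order one with matrix symbol and principal symbol $-|\xi_{1}|_{g|_{\partial N}}\,\mathrm{Id}$; the homogeneous pieces $b_{1},b_{0},b_{-1},\dots$ of the symbol of $B$ are then determined recursively by matching symbol components in the factorisation identity. Since the harmonic extension $u$ of $\sigma$ satisfies $(\partial_{2}-B)u\in C^{\infty}$ and $A_{2}\equiv0$, the DtN operator is, up to the sign convention fixed in the introduction, the restriction of $B$ to $x^{2}=0$, so $\lambda_{j}(x^{1},\xi_{1})=\pm b_{j}(x^{1},0,\xi_{1})$ for each $j\le1$. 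From $\lambda_{1}=-|\xi_{1}|_{g|_{\partial N}}\,\mathrm{Id}=-|\xi_{1}|\,g_{11}^{-1/2}\,\mathrm{Id}$ one reads off $g_{11}(p)$. The order-zero identity then expresses $\lambda_{0}$ at $p$ as the sum of a term even in $\xi_{1}$, built only from $g_{11}(p)$ and $\partial_{2}g_{11}(p)$, and a term odd in $\xi_{1}$ that is an affine function of $\partial_{1}g_{11}(p)$ and $A_{1}(p)$ with invertible scalar leading coefficient; computing $\partial_{1}g_{11}(p)$ from the tangential derivative $\partial_{1}\lambda_{1}(p)$ and substituting recovers $A_{1}(p)$. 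Together with the normalisations $g_{22}\equiv1$, $g_{12}\equiv0$, $A_{2}\equiv0$ this gives $g$ and $\nabla^{E}$ at $p$ by explicit formulas in $\lambda_{1}(p)$, $\lambda_{0}(p)$ and $\partial_{1}\lambda_{1}(p)$, as claimed.

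The step I expect to be the main obstacle is this order-zero inversion in the presence of a non-abelian connection: the symbol calculus is matrix-valued and the $b_{j}$ no longer commute with one another or with their derivatives, so one must track precisely where $A_{1}$ enters, verify that the contribution of $A_{2}$ really drops out on $x^{2}=0$, and check both that the connection and the metric contributions to $\lambda_{0}$ separate by parity in $\xi_{1}$ and that the coefficient of $A_{1}$ is invertible — this is what makes the formula for $A_{1}(p)$ valid for an arbitrary, not necessarily orthonormal, local frame. It is also worth recording why the argument stops after order zero rather than continuing, as in Theorem~\ref{thm:1}, to extract $\partial_{2}g_{11}$, $\partial_{2}A_{1}$ and higher normal derivatives from $\lambda_{-1},\lambda_{-2},\dots$: in dimension two these are not determined by $\Lambda$, since $\Lambda_{e^{2\varphi}g,\nabla^{E}}=\Lambda_{g,\nabla^{E}}$ whenever $\varphi|_{\partial N}=0$ and since $\Lambda$ is invariant under gauge transformations equal to the identity on $\partial N$; the lower-order equations thus become underdetermined in exactly the geometrically expected way, and, as Theorem~\ref{thm:2} asserts only recovery of the $0$-jet, the proof terminates at this point.
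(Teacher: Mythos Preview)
Your proposal is correct and follows essentially the same approach as the paper. The paper's own proof of Theorem~\ref{thm:2} is a one-line remark that it ``follows immediately from the proof of Theorem~\ref{thm:1}'': one simply runs the first two steps of that proof (recover $g^{kl}$ from $\lambda_{1}=-\sqrt{q_{2}}$, then recover $\omega_{k}$ from the odd-in-$\xi'$ part of $\lambda_{0}$ after subtracting the metric term $T_{0}$), which are valid in every dimension, and stops before invoking \eqref{eq:3.8}, which is the only place where $n\ge3$ is used. Your write-up spells out exactly these two steps in the $n=2$ case, together with the conformal (and gauge) obstruction that prevents the recursion from continuing; this is precisely the content of the paper's Section~3.5. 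One small remark: in dimension two the ``even'' contribution to $\lambda_{0}$ that you describe as being built from $g_{11}$ and $\partial_{2}g_{11}$ actually cancels identically (this is the infinitesimal manifestation of the conformal invariance you cite), so $\lambda_{0}$ is purely odd in $\xi_{1}$; your recovery of $A_{1}$ from the odd part is unaffected by this.
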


The structure of this paper is as follows. In Section \ref{sec:Background-material}
we recall some background material including the definition and properties
of the connection Laplacian and associated DtN operator. In Section
\ref{sec:Reconstruction-of-the} we obtain some provisional results
such as a local representation of the connection Laplacian in boundary
normal coordinates and boundary normal frame, the local factorization
of the connection Laplacian in these coordinates, and the relation
of this factorization to the DtN operator. We then use this results
to prove Theorem \ref{thm:1}. The relationship between the geometry
of isomorphic vector bundles, where the isomorphism intertwines the
DtN operators, and the discussion of the two-dimensional case conclude
the section.

\section{\label{sec:Background-material}Background material}

\subsection{Notation}

Let $\left(N,g\right)$ be a compact connected Riemannian manifold
with boundary $\partial N$ and $\left(E,\nabla^{E}\right)$ be a
vector bundle over $N$ of rank $r$ with a connection $\nabla^{E}:\Gamma\left(E\right)\rightarrow\Gamma\left(E\otimes T^{*}N\right)$.
We assume that $E$ is equipped with a compatible inner product $\left\langle \cdot,\cdot\right\rangle _{E}$,
the relation 
\[
d\left\langle u,v\right\rangle _{E}=\left\langle \nabla^{E}u,v\right\rangle _{E}+\left\langle u,\nabla^{E}v\right\rangle _{E}
\]
holds for any pair of smooth sections $u,v\in\Gamma\left(E\right)$.
Note that both sides of the above identity are differential forms.
We use the standard notation $\Gamma\left(E\right)$ for $C^{\infty}$-smooth
sections of a vector bundle $E$, and $H^{s}\left(E\right)$ for $H^{s}$-smooth
sections, where $H^{s}=W^{s,2}$ and $W^{s,p}$ denotes the $\left(s,p\right)$-Sobolev
space. Note that in this notation $H^{0}\left(E\right)=L^{2}\left(E\right)$.
The space of test sections of $E$ is denoted as usually by $\mathcal{D}\left(E\right)$.
The continuous dual of this space is denoted by $\mathcal{D}'\left(E\right)$. 

We can define the $L^{2}-$inner product of sections by
\[
\left\langle u,v\right\rangle _{L^{2}}=\underset{N}{\int}\left\langle u,v\right\rangle _{E}dV_{g},
\]
where $dV_{g}$ is the Riemannian volume measure of $\left(N,g\right)$.
Similarly, we can define the $L^{2}-$inner product in $\Gamma\left(E\otimes T^{*}N\right)$
by 
\[
\left\langle \alpha,\beta\right\rangle _{L^{2}\left(E\otimes T^{*}N\right)}=\underset{N}{\int}Tr_{g}\left\langle \alpha,\beta\right\rangle _{E}dV_{g},
\]
where the sections $\alpha,\beta\in\Gamma\left(E\otimes T^{*}N\right)$
considered as the $E-$valued $1-$forms.

Let us consider the connection Laplacian $\triangle^{E}$ defined
by
\[
\triangle^{E}=-Tr_{g}\,\bar{\nabla}^{E}\nabla^{E},
\]
where $\bar{\nabla}^{E}=\nabla^{E}\otimes\nabla^{LC}:\Gamma\left(E\otimes T^{*}N\right)\rightarrow\Gamma\left(E\otimes T^{*}N\otimes T^{*}N\right)$
and $\nabla^{LC}$ is the Levi-Civita connection on $\left(N,g\right)$.
Note that we have the following equality \cite{key-9}

\[
\triangle^{E}=\left(\nabla^{E}\right)^{\ast}\nabla^{E},
\]
where $\left(\nabla^{E}\right)^{\ast}$ is the adjoint of $\nabla^{E}$
with respect to the $L_{2}-$inner products defined above. We will
occasionally omit the word ``connection'' and call this operator
the Laplacian for brevity. When it will not make any confusion, we
will also sometimes omit the superscript $E$ in the notation of the
connection $\nabla^{E}$.

Let us consider the Dirichlet problem for the Laplacian on a Euclidean
vector bundle $E$ over a compact connected Riemannian manifold $\left(N,g\right)$
with boundary $\partial N$:
\begin{equation}
\begin{cases}
\triangle^{E}u=0 & on\,\,N,\\
\left.u\right|_{\partial N}=\sigma & u\in\Gamma\left(E\right),\,\sigma\in\Gamma\left(\left.E\right|_{\partial N}\right).
\end{cases}\label{eq:3.1}
\end{equation}
It has a unique solution for every $\sigma\in\Gamma\left(\left.E\right|_{\partial N}\right)$.
We briefly explain why this is true in the next section. Now this
allows us to introduce the \emph{Dirichlet-to-Neumann operator} $\Lambda_{g,\nabla^{E}}:\Gamma\left(\left.E\right|_{\partial N}\right)\rightarrow\Gamma\left(\left.E\right|_{\partial N}\right)$
associated with the connection Laplacian $\nabla^{E}$ by 
\[
\Lambda_{g,\nabla^{E}}\sigma=\left.\nabla_{\nu}^{E}u\right|_{\partial N},
\]
where $u$ is the solution to \ref{eq:3.1}, and $\nu$ is the outward
unit normal vector field on $\partial N$.

\subsection{Discussion on the Dirichlet problem}

We start with well-known Green's identities. Let $w\in\Gamma\left(E\otimes T^{*}N\right)$
and $v\in\Gamma\left(E\right)$. Then we have the first Green's identity
\begin{gather*}
\left\langle \nabla^{*}w,v\right\rangle _{L^{2}}-\left\langle w,\nabla v\right\rangle _{L^{2}\left(E\otimes T^{*}N\right)}=\underset{\partial N}{\int}\left\langle \iota_{\nu}w,v\right\rangle dS_{g},
\end{gather*}
where $dS_{g}$ is the associated volume form on the boundary $\partial N$,
and $\nu$ is the outward unit normal vector field on $\partial N$.
Now let $w=\nabla u$, where $u\in\Gamma\left(E\right)$. Then we
obtain the following identity for the Laplacian
\begin{equation}
\left\langle \triangle^{E}u,v\right\rangle _{L^{2}}=\left\langle \nabla^{*}\nabla u,v\right\rangle _{L^{2}}=\left\langle \nabla u,\nabla v\right\rangle _{L^{2}\left(E\otimes T^{*}N\right)}+\underset{\partial N}{\int}\left\langle \iota_{\nu}\nabla u,v\right\rangle dS_{g},\label{eq:2.1(1)}
\end{equation}
which gives us the second Green's identity\emph{
\begin{equation}
\left\langle \triangle^{E}u,v\right\rangle _{L^{2}}-\left\langle u,\triangle^{E}v\right\rangle _{L^{2}}=\underset{\partial N}{\int}\left\langle \iota_{\nu}\nabla u,v\right\rangle dS_{g}-\underset{\partial N}{\int}\left\langle u,\iota_{\nu}\nabla v\right\rangle dS_{g}.\label{eq:2.1(2)}
\end{equation}
}

From \eqref{eq:2.1(1)} we conclude that the kernel of the Dirichlet
Laplacian is zero, since
\[
0=\left\langle \triangle^{E}u,u\right\rangle _{L^{2}}=\left\langle \nabla u,\nabla u\right\rangle _{L^{2}\left(E\otimes T^{*}N\right)}+\underset{\partial N}{\int}\left\langle \iota_{\nu}\nabla u,u\right\rangle dS_{g}=\left\langle \nabla u,\nabla u\right\rangle _{L^{2}\left(E\otimes T^{*}N\right)}=\left\Vert \nabla u\right\Vert _{L^{2}\left(E\otimes T^{*}N\right)},
\]
which implies
\[
\nabla u=0,
\]
and therefore
\[
d\left\langle u,u\right\rangle _{E}=2\left\langle \nabla u,u\right\rangle _{E}=0.
\]
From this we conclude that $\left\langle u,u\right\rangle _{E}$ is
constant on $N$ and since $u$ vanishes on the boundary it has to
be identically zero on the whole manifold $N$. It is clear that this
implies the uniqueness of the solution to the boundary value problem
\eqref{eq:3.1}.

The existence of the solution can be shown in two steps. First, we
use the Lax-Milgram theorem to prove the existence of a weak solution
to the Dirichlet problem
\begin{equation}
\begin{cases}
\triangle^{E}u=-\triangle^{E}\tilde{\sigma}, & u\in H^{1}\left(E\right),\\
\left.u\right|_{\partial N}=0,
\end{cases}\label{eq:2.2-1}
\end{equation}
where $\tilde{\sigma}\in H^{1}\left(E\right)$ in the right hand side
is a given function. Then we use the elliptic regularity to show that
this solution is smooth if $\tilde{\sigma}$ is. Below we briefly
discuss both steps.

Let us recall the Lax-Milgram theorem \cite{key-4}.
\begin{thm*}[Lax-Milgram]
 Let $V$ be a Hilbert space and $a\left(\cdot,\cdot\right)$ a bilinear
form on $V$, which is
\begin{enumerate}
\item Bounded: $\left|a\left(u,v\right)\right|\leq C\left\Vert u\right\Vert \left\Vert v\right\Vert $
and
\item Coercive: $\left|a\left(u,u\right)\right|\geq c\left\Vert u\right\Vert ^{2}$.
\end{enumerate}
Then for any continuous linear functional $f\in V^{\prime}$ there
is a unique solution $u\in V$ to the equation
\[
a\left(u,v\right)=f\left(v\right),\,\forall v\in V
\]
and the following inequality holds
\[
\left\Vert u\right\Vert \leq\frac{1}{c}\left\Vert f\right\Vert _{V^{\prime}}.
\]
\end{thm*}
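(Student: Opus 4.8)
The statement is the classical Lax--Milgram lemma, and the plan is to deduce it from the Riesz representation theorem. First I would fix $u\in V$ and note that, by boundedness, $v\mapsto a(u,v)$ is a continuous linear functional on $V$; Riesz therefore provides a unique element $Au\in V$ with $a(u,v)=\langle Au,v\rangle$ for all $v\in V$. The map $u\mapsto Au$ is clearly linear, and $\|Au\|=\sup_{\|v\|\le1}|a(u,v)|\le C\|u\|$, so $A$ is a bounded operator on $V$. Applying Riesz once more to the functional $f$ yields a unique $\phi\in V$ with $f(v)=\langle\phi,v\rangle$ and $\|\phi\|=\|f\|_{V'}$. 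Since the requirement $a(u,v)=f(v)$ for all $v$ is equivalent to $Au=\phi$, the whole assertion reduces to showing that $A$ is a bijection of $V$.

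Next I would exploit coercivity: for every $u\in V$,
\[
c\|u\|^{2}\le|a(u,u)|=|\langle Au,u\rangle|\le\|Au\|\,\|u\|,
\]
hence $\|Au\|\ge c\|u\|$. This lower bound gives injectivity of $A$ at once, and it also shows that $R:=A(V)$ is closed: if $(Au_{n})$ is Cauchy then $\|u_{n}-u_{m}\|\le c^{-1}\|Au_{n}-Au_{m}\|$ forces $(u_{n})$ to be Cauchy, and continuity of $A$ exhibits the limit of $(Au_{n})$ as a value of $A$.

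The one genuinely nontrivial point — the main obstacle — is surjectivity, which I would obtain by an orthogonality argument now that $R$ is known to be a closed subspace: it suffices to check $R^{\perp}=\{0\}$. If $w\in R^{\perp}$, then in particular $\langle Aw,w\rangle=0$, i.e. $a(w,w)=0$, so coercivity gives $c\|w\|^{2}\le|a(w,w)|=0$ and $w=0$. Thus $R=V$ and $A$ is a continuous linear bijection; setting $u:=A^{-1}\phi$ solves $a(u,\cdot)=f$, uniqueness is the injectivity of $A$, and testing the equation against $u$ itself gives $c\|u\|^{2}\le|a(u,u)|=|f(u)|\le\|f\|_{V'}\|u\|$, whence $\|u\|\le c^{-1}\|f\|_{V'}$. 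A small point worth noting: the coercivity hypothesis is phrased here with $|a(u,u)|\ge c\|u\|^{2}$ rather than the more common $\operatorname{Re}a(u,u)\ge c\|u\|^{2}$, but the argument only ever uses it through the implications ``$a(w,w)=0\Rightarrow w=0$'' and ``$c\|u\|^{2}\le|a(u,u)|$'', so the stated version is exactly what is needed; over a complex scalar field one simply reads $\langle\cdot,\cdot\rangle$ as conjugate-linear in the second argument and keeps the usual conjugates in the Riesz step.
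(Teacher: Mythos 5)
Your argument is correct and complete: the reduction to the operator equation $Au=\phi$ via two applications of the Riesz representation theorem, the lower bound $\|Au\|\ge c\|u\|$ from coercivity giving injectivity and closed range, the orthogonality argument for surjectivity, and the final estimate $c\|u\|^{2}\le|f(u)|\le\|f\|_{V'}\|u\|$ are all in order, and your closing remark correctly identifies that the hypothesis $|a(u,u)|\ge c\|u\|^{2}$ is used only through those two implications. Note that the paper does not prove this statement at all -- it quotes Lax--Milgram as a known theorem with a citation to Evans -- and your proof is essentially the standard one found in that reference, so there is nothing to reconcile.
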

Let $\mathring{H}^{1}\left(E\right)\coloneqq\mathring{H}^{1,2}\left(E\right)$
denotes the Sobolev space of sections vanishing on the boundary $\partial N$.
Consider a bilinear form $a\left(\cdot,\cdot\right)$ which acts on
$u,v\in\mathring{H}^{1}\left(E\right)$ as

\[
a\left(u,v\right)=\left\langle \nabla u,\nabla v\right\rangle _{L^{2}\left(E\otimes T^{*}N\right)}.
\]
We see that $a\left(u,v\right)$ is a bounded bilinear form on $\mathring{H}^{1}\left(E\right)$.
For a given $\tilde{\sigma}\in H^{1}\left(E\right)$ let us take the
functional 
\[
f_{\tilde{\sigma}}\left(v\right)=-\left\langle \nabla\tilde{\sigma},\nabla v\right\rangle _{L^{2}\left(E\otimes T^{*}N\right)}.
\]
 Clearly, this is a bounded linear functional on $\mathring{H}^{1}\left(E\right)$.
So it is left to show that $a\left(u,v\right)$ is coercive, i.e.
there exists $c$ such that
\[
\frac{\left\Vert \nabla u\right\Vert _{L^{2}\left(E\otimes T^{*}N\right)}^{2}}{\left\Vert u\right\Vert _{L^{2}}^{2}}\geq c>0.
\]
This follows, for instance, from the Sobolev embedding theorem \cite{key-3}.
We showed that all the conditions of the Lax-Milgram theorem are satisfied,
therefore, there is a unique weak solution $u\in\mathring{H}^{1}\left(E\right)$
to \eqref{eq:2.2-1}. Let us now state the elliptic regularity theorem
(see, for example, \cite{key-3}).
\begin{thm*}[Elliptic regularity]
 Suppose $\varphi\in H^{s}\left(E\right)$ (smooth), and $u\in\mathscr{D}^{\prime}\left(E\right)$
solves the equation
\[
\triangle^{E}u=\varphi.
\]
Then $u\in H^{s+2}\left(E\right)$ (respectively smooth).
\end{thm*}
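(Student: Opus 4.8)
The plan is to reduce the statement to the classical local elliptic estimate for second-order systems and then bootstrap. First I would record that $\triangle^{E}$ is elliptic: in any coordinate chart $\left(x^{1},\dots,x^{n}\right)$ over which $E$ is trivialized by a frame $\left(\epsilon_{1},\dots,\epsilon_{r}\right)$, writing $u=u^{a}\epsilon_{a}$ and expanding $\triangle^{E}=-\mathrm{Tr}_{g}\,\bar{\nabla}^{E}\nabla^{E}$ gives
\[
\triangle^{E}u=-g^{ij}\partial_{i}\partial_{j}u+B^{i}\partial_{i}u+Cu,
\]
where $B^{i}$ and $C$ are smooth $\mathrm{Mat}_{r\times r}$-valued functions built from the Christoffel symbols of $g$ and the connection coefficients of $\nabla^{E}$. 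Hence the principal symbol is $\sigma_{2}(\triangle^{E})(x,\xi)=\left|\xi\right|_{g(x)}^{2}\,\mathrm{Id}_{\mathbb{R}^{r}}$, which is invertible for $\xi\neq0$; thus $\triangle^{E}$ is an elliptic system whose principal part is scalar (it decouples the components at top order) with smooth coefficients.

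Next I would invoke the standard local regularity for such systems, in one of two equivalent ways. (i) Construct a properly supported pseudodifferential parametrix $Q$ of order $-2$ with $Q\circ\triangle^{E}=\mathrm{Id}+R$, $R$ smoothing --- possible precisely because $\sigma_{2}$ is everywhere invertible --- so that locally $u=Q\varphi-Ru$, and $u$ gains two Sobolev derivatives over $\varphi$ modulo a $C^{\infty}$ error. (ii) Prove the G\r{a}rding-type a priori estimate $\|\chi u\|_{H^{s+2}}\leq C\bigl(\|\varphi\|_{H^{s}}+\|u\|_{H^{s+1}}\bigr)$ on a chart by Fourier analysis, then upgrade a merely distributional solution to a genuine $H^{s+2}$ solution by a difference-quotient (Nirenberg) argument. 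Either way one obtains: if $u\in H^{t}_{\mathrm{loc}}$ and $\varphi\in H^{s}_{\mathrm{loc}}$ then $u\in H^{\min(t+2,\,s+2)}_{\mathrm{loc}}$.

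The bootstrap then finishes the proof. Since $u\in\mathscr{D}'\left(E\right)$ and $N$ is compact, $u\in H^{t_{0}}\left(E\right)$ for some possibly very negative $t_{0}$; applying the local estimate repeatedly raises the exponent by $2$ at each step until it reaches $s+2$, so $u\in H^{s+2}_{\mathrm{loc}}$, and patching over a finite cover of $N$ with a partition of unity gives $u\in H^{s+2}\left(E\right)$. If $\varphi\in C^{\infty}\left(E\right)$ the iteration never terminates, so $u\in\bigcap_{k}H^{k}\left(E\right)=C^{\infty}\left(E\right)$ by the Sobolev embedding theorem. For the variant actually used for the Dirichlet problem, namely regularity up to $\partial N$ for the weak solution of \eqref{eq:2.2-1}, one flattens the boundary via boundary normal coordinates and runs the same argument with \emph{tangential} difference quotients (which respect the homogeneous condition $\left.u\right|_{\partial N}=0$) to control all tangential derivatives, and then reads the remaining normal second derivative off the equation itself, using $g^{nn}\neq0$ in these coordinates.

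I expect the only genuine obstacle to be the passage from ``$u$ is a priori only a distribution'' to a Sobolev solution on which the induction can start: with the pseudodifferential parametrix this is automatic, since $\Psi$DOs act on all of $\mathscr{D}'\left(E\right)$; with the elementary approach it requires the difference-quotient argument together with the observation that every compactly supported distribution lies in some $H^{-M}$. The remaining ingredients --- the local elliptic estimate, the iteration, and the partition-of-unity globalization --- are routine.
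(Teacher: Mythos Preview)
Your sketch is a correct outline of the standard elliptic-regularity argument for a second-order system with scalar principal symbol: compute $\sigma_{2}(\triangle^{E})(x,\xi)=\left|\xi\right|_{g}^{2}\mathrm{Id}$, build a parametrix (or prove the a priori estimate), and bootstrap. The only point I would tighten is the claim that compactness of $N$ forces $u\in H^{t_{0}}(E)$ for some $t_{0}$: since $\mathscr{D}'(E)$ here is the dual of sections compactly supported in the interior, this gives you $u\in H^{t_{0}}_{\mathrm{loc}}$ on the interior, which is all you need for the interior bootstrap; you already handle the boundary separately in the last paragraph.

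As for the comparison: the paper does not prove this theorem at all. It is stated there as a black box with a reference to Nicolaescu's \emph{Lectures on the Geometry of Manifolds} (the paper's \cite{key-3}), and is invoked only to upgrade the Lax--Milgram weak solution of \eqref{eq:2.2-1} to a smooth one. So your proposal supplies an argument where the paper simply cites the literature; what you have written is exactly the kind of proof one finds in that reference.
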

Using this theorem we conclude that if $\tilde{\sigma}$ is smooth
then the solution $u$ to \eqref{eq:2.2-1} is also smooth.

To obtain the solution to the boundary value problem \eqref{eq:3.1}
we take a smooth extension $\tilde{\sigma}\in\Gamma\left(E\right)$
of $\sigma$, and write $u=w+\tilde{\sigma}$. We see that 
\[
\begin{cases}
\triangle^{E}u=\triangle^{E}w+\triangle^{E}\tilde{\sigma},\\
\left.u\right|_{\partial N}=\left.w\right|_{\partial N}+\left.\tilde{\sigma}\right|_{\partial N}=\left.w\right|_{\partial N}+\sigma.
\end{cases}
\]
Now if $w$ is the solution to \eqref{eq:2.2-1}, then $u$ is the
solution to \eqref{eq:3.1}. Note that the restriction to the boundary
extends to the trace map 
\begin{equation}
\mathcal{T}:H^{s}\left(E\right)\rightarrow H^{s-\frac{1}{2}}\left(\left.E\right|_{\partial N}\right),\label{eq:TrMp}
\end{equation}
which has the right inverse 
\begin{equation}
\mathcal{E}:H^{s-\frac{1}{2}}\left(\left.E\right|_{\partial N}\right)\rightarrow H^{s}\left(E\right),\label{eq:InTrMp}
\end{equation}
i.e. the map such that the equality$\mathcal{T\circ E}=Id$ holds;
both maps are linear bounded operators. Using this we can show the
existence and uniqueness of the solution to the boundary value problem
\begin{equation}
\begin{cases}
\triangle^{E}u=0, & u\in H^{s+\frac{1}{2}}\left(E\right),\\
\mathcal{T}\left(u\right)=\sigma, & \sigma\in H^{s}\left(\left.E\right|_{\partial N}\right),
\end{cases}\label{eq:2.5}
\end{equation}
for $s\geq\nicefrac{1}{2}$. Note that from the elliptic regularity
the solution $u$ is smooth in the interior of $N$.

Let us now consider the complexification $\mathbb{C}\otimes E\simeq E\oplus iE$
of the Euclidean vector bundle $E$. It has a natural structure of
Hermitian vector bundle and a compatible connection $\nabla=\nabla^{E}\oplus\nabla^{E}$.
The associated Laplacian is 
\[
\triangle=\triangle^{E}\oplus\triangle^{E},
\]
and we see that the complex analogue of problem \eqref{eq:2.5} decomposes
into two real ones. So all the results on the existence and uniqueness
are straightforward from the real case. We will further assume the
complexified objects (vector bundle, connection, Laplacian) and use
the same notation for them as for their real counterparts.

\subsection{Pseudodifferential operators on vector bundles.}

In this section we will recall the definition of a (standard) pseudodifferential
operator (PDO) on vector bundles. In this section we follow the exposition
by Treves in \cite{key-2}. Let $\Omega$ be an open subset of $\mathbb{R}^{n}$.
We need first to define the special class of functions called \emph{amplitudes}.
Let $m$ be any real number. We shall denote by $S^{m}\left(\Omega,\Omega\right)$
the linear space of $C^{\infty}$ functions in $\Omega\times\Omega\times\mathbb{R}^{n}$,
$a\left(x,y,\xi\right)$, which have the following property:

To every compact subset $\mathscr{K}$ of $\Omega\times\Omega$ and
to every triplet of $n$-tuples $\alpha,\beta,\gamma$, there is a
constant $C_{\alpha,\beta,\gamma}\left(\mathscr{K}\right)>0$ such
that
\begin{equation}
\left|D_{\xi}^{\alpha}D_{x}^{\beta}D_{y}^{\gamma}a\left(x,y,\xi\right)\right|\leq C_{\alpha,\beta,\gamma}\left(\mathscr{K}\right)\left(1+\left|\xi\right|\right)^{m-\left|\alpha\right|},\,\forall\left(x,y\right)\in\mathscr{K},\textrm{and }\forall\xi\in\mathbb{R}^{n},\label{eq:1.5}
\end{equation}
where $D_{z_{i}}\coloneqq-i\partial_{z_{i}}$. The elements of $S^{m}\left(\Omega,\Omega\right)$
are called amplitudes of degree $\leq m$ (in $\Omega\times\Omega$).
The space $S^{m}\left(\Omega,\Omega\right)$ is endowed with a natural
locally convex topology: denote by $\mathfrak{p}_{\mathscr{K};\alpha,\beta,\gamma}\left(a\right)$
the infimum of the constants $C_{\alpha,\beta,\gamma}\left(\mathscr{K}\right)$
such that \eqref{eq:1.5} is true. One can see then that the function
$\mathfrak{p}_{\mathscr{K};\alpha,\beta,\gamma}$ is a seminorm on
$S^{m}\left(\Omega,\Omega\right)$ and defines the topology of this
space when $\mathscr{K}$ ranges over the collection of all compact
subsets of $\Omega$ and $\alpha,\beta,\gamma$ over that of all $n$-tuples.
Thus topologized, $S^{m}\left(\Omega,\Omega\right)$ is a Fr�chet
space. We denote the subspace of amplitudes independent of $y$ by
$S^{m}\left(\Omega\right)$ and regard its elements as smooth functions
in $\Omega\times\mathbb{R}^{n}$ (rather than in $\Omega\times\Omega\times\mathbb{R}^{n}$).
The topology on $S^{m}\left(\Omega\right)$ is the induced subspace
topology from $S^{m}\left(\Omega,\Omega\right)$. Hence, $S^{m}\left(\Omega\right)$
is a Fr�chet space, as a closed linear subspace of the Fr�chet space.
The intersection of all $S^{m}\left(\Omega\right)$ is denoted by
$S^{-\infty}\left(\Omega\right)$. The quotient vector space $S^{m}\left(\Omega\right)/S^{-\infty}\left(\Omega\right)$
is denoted by $\dot{S}^{m}\left(\Omega\right)$ and its elements are
called \emph{symbols }of degree $\leq m$. We use the term symbol
for a representative ($\in S^{m}\left(\Omega\right)$) of an equivalence
class in $\dot{S}^{m}\left(\Omega\right)$ as well. We will also need
the notion of a formal symbol. By a \emph{formal symbol }we mean a
sequence of symbols $a_{m_{j}}\in S^{m_{j}}\left(\Omega\right)$ whose
orders $m_{j}$ are strictly decreasing and converging to $-\infty$.
It is standard to represent it by the formal series 
\begin{equation}
\stackrel[j=0]{+\infty}{\sum}a_{m_{j}}\left(x,\xi\right).\label{eq:1.6}
\end{equation}
From such a formal symbol one can build true symbols, elements of
$S^{m_{0}}\left(\Omega\right)$ in the present case, which all belong
to the same class modulo $S^{-\infty}\left(\Omega\right)$. We will
denote this class by \eqref{eq:1.6}. In order to construct a true
symbol one may proceed as follows:

First, one may state that a symbol $a\left(x,\xi\right)$ belongs
to the class \eqref{eq:1.6} if, given any large positive number $M$,
there is an integer $J\geq0$ such that
\[
a\left(x,\xi\right)-\stackrel[j=0]{J}{\sum}a_{m_{j}}\left(x,\xi\right)\in S^{-M}\left(\Omega\right).
\]
Second, one can construct such a symbol $a\left(x,\xi\right)$ as
a sum of a series
\[
\stackrel[j=0]{+\infty}{\sum}\chi_{j}\left(\xi\right)a_{m_{j}}\left(x,\xi\right),
\]
where $\chi_{j}\left(\xi\right)$ are suitable cutoff functions. By
suitable we mean such that the above series converges in the space
$S^{m_{0}}\left(\Omega\right)$. Note that since we are using the
cutoff functions we are able to deal with terms $a_{m_{j}}\left(x,\xi\right)$
that are not ``true'' elements of $S^{m_{j}}\left(\Omega\right)$,
e.g. the functions that are non-smooth or even not defined in neighborhoods
of the origin $\xi=0$. (If such neighborhoods depend on $x$ we may
consider the cutoff functions that also depend on $x$). The most
important examples of formal symbols \eqref{eq:1.6} with terms $a_{m_{j}}\left(x,\xi\right)$
that are not $C^{\infty}$ functions of $\xi$ at the origin are the
\emph{classical symbols.} The formal symbol \eqref{eq:1.6} is called
the classical symbol if each term $a_{m_{j}}\left(x,\xi\right)$ is
a positive-homogeneous function of degree $m_{j}$ of $\xi$ and if
differences $m_{j}-m_{j+1}\in\mathbb{Z}^{+}$, for all $j$. Recall
that a function $f\left(\xi\right)$ is called \emph{positive-homogeneous}
of degree $d$ with respect to $\xi$ in $\mathbb{R}^{n}\backslash\left\{ 0\right\} $
if $f\left(\rho\xi\right)=\rho^{d}f\left(\xi\right)$ for every $\rho>0$
(but not necessarily for every real $\rho$). For instance, the Heaviside
function on $\mathbb{R}\backslash\left\{ 0\right\} $ is positive-homogeneous,
but not homogeneous, of degree zero.

Let us continue with the definition of a (standard) PDO. A linear
operator $A:\mathscr{E}^{\prime}\left(\Omega\right)\rightarrow\mathscr{D}^{\prime}\left(\Omega\right)$
is called a (standard) pseudodifferential operator of order $m$ if
there is an amplitude $a\in S^{m}\left(\Omega,\Omega\right)$ such
that $A$ can be represented in the form
\[
Au\left(x\right)=\left(2\pi\right)^{-n}\int\int e^{i\left(x-y\right)\cdot\xi}a\left(x,y,\xi\right)u\left(y\right)dyd\xi.
\]

Now we are able to define pseudodifferential operators acting on vector-valued
distributions. Let $\mathbb{F}^{n}$ be a $n$-dimensional vector
space over field $\mathbb{F}$($=\mathbb{R},\mathbb{C}$). Let $\mathscr{D}^{\prime}\left(\Omega;\mathbb{F}^{n}\right)$
($\mathscr{E}^{\prime}\left(\Omega;\mathbb{F}^{n}\right)$) denote
the space of (compactly supported) distributions in $\Omega$. Note
that if we chose basis in $\mathbb{F}^{n}$ then a $\mathbb{F}^{n}$-valued
distribution is a vector with coordinates consisting of $n$ scalar
distributions. Using this we can give the following natural definition.
A linear operator $A:\mathscr{E}^{\prime}\left(\Omega;\mathbb{F}^{r}\right)\rightarrow\mathscr{D}^{\prime}\left(\Omega;\mathbb{F}^{l}\right)$
is called a pseudodifferential operator if in any bases of $\mathbb{F}^{r}$
and $\mathbb{F}^{l}$ it is represented by a matrix of scalar pseudodifferential
operators. 

Let $F_{1}$ and $F_{2}$ be two vector bundles over a smooth manifold
$N$ with fibers $\mathbb{F}^{r}$ and $\mathbb{F}^{l}$, respectively.
Note that any $F_{i}$-valued distribution ($i=1,2$) in any local
trivialization is represented by a vector with coordinates being scalar
distributions. This allows to represent any linear operator $\Gamma\left(F_{1}\right)\rightarrow\Gamma\left(F_{2}\right)$
in any local trivializations as an $l\times r$-matrix of linear operators
$C^{\infty}\left(N\right)\rightarrow C^{\infty}\left(N\right)$. Therefore,
it is natural to give the following definition. 
\begin{defn*}
A linear operator $A:\mathscr{E}^{\prime}\left(F_{1}\right)\rightarrow\mathscr{D}^{\prime}\left(F_{2}\right)$
is called a pseudodifferential operator (of order $m$) if in any
pair of local trivializations it is represented by a matrix of scalar
pseudodifferential operators (of order $m$). 
\end{defn*}
If vector bundles $F_{1}$ and $F_{2}$ are equal to the vector bundle
$F$, then we say that $A$ is a PDO (acting) on a vector bundle $F$.
Due to this definition most of the theory for scalar PDOs generalizes
to this case naturally, e.g. the symbol calculus. In particular, if
$A$ and $B$ are two PDOs on vector bundle $F$, then the symbol
of the composition $A\circ B$ is defined by the formal symbol
\begin{equation}
\underset{\alpha}{\sum}\frac{1}{\alpha!}\partial_{\xi}^{\alpha}a\left(x,\xi\right)D_{x}^{\alpha}b\left(x,\xi\right),\label{eq:SymbOfComp}
\end{equation}
where $a\left(x,\xi\right)$ and $b\left(x,\xi\right)$ are the symbols
of $A$ and $B$, respectively.

We shall say that a PDO $A$ is smoothing if it maps $\mathscr{E}^{\prime}\left(\Omega\right)$
to $C^{\infty}\left(\Omega\right)$. In order for this to be the case,
it is necessary and sufficient for the associated kernel $K_{A}\left(x,y\right)$
to be $C^{\infty}$ in $\Omega\times\Omega$. 

\subsection{\label{subsec:Well-posedness-of-the}Well-posedness of the generalized
heat equation and regularity of its solution.}

In this subsection we describe the result by Treves \cite[III.1]{key-2}.
In order to be precise we introduce the original setting. Let $X$
be a smooth manifold; $n=\dim X$; $t$ be the variable in the closed
half-line $\overline{\mathbb{R}}_{+}$; $T$ be some positive real
number.

We shall deal with functions and distributions valued in a finite-dimensional
Hilbert space $H$ over $\mathbb{C}$. The norm in $H$ will be denoted
by $\left|\cdot\right|_{H}$, whereas the operator norm in $L\left(H\right)$,
the space of (bounded) linear operators in $H$, will be denoted by
$\left\Vert \cdot\right\Vert $. The inner product in $H$ will be
denoted by $\left(\cdot,\cdot\right)_{H}$. The space of $H$-valued
distributions in $X$ will be denoted by $\mathscr{D}^{\prime}\left(X;H\right)$.

Let $A\left(t\right)$ be a pseudodifferential operator of order $1$
in $X$, valued in $L\left(H\right)$, depending smoothly on $t\in\left[0,T\right)$.
If we fix basis in $H$, then $A\left(t\right)$ is a matrix whose
entries are scalar pseudodifferential operators in $X$. This means
that in every local chart $\left(\Omega,x_{1},\dots,x_{n}\right)$,
$A\left(t\right)$ is congruent modulo smoothing operators which are
$C^{\infty}$-functions of $t$ to an operator
\[
A_{\Omega}\left(t\right)u\left(x\right)=\left(2\pi\right)^{-n}\int e^{ix\cdot\xi}a_{\Omega}\left(x,t,\xi\right)\hat{u}\left(\xi\right)d\xi,\quad u\in C_{c}^{\infty}\left(\Omega;H\right),
\]
where $a_{\Omega}\left(x,t,\xi\right)$ is a smooth function of $t\in\left[0,T\right)$
valued in $S^{1}\left(\Omega;L\left(H\right)\right)$, the space of
symbols valued in $L\left(H\right)$.

According to Treves \cite[III.1]{key-2}, the heat equation for $A\left(t\right)$
is well-posed and possesses a regularity property described below
if the following conditions are satisfied:
\begin{enumerate}
\item Let $\left(\Omega,x_{1},\dots,x_{n}\right)$ be any local chart in
$X$. There is a symbol $a_{\Omega}\left(x,t,\xi\right)$ satisfying
\[
a_{\Omega}\left(x,t,\xi\right)\textrm{ is a \ensuremath{C^{\infty}} function of \ensuremath{t\in\left[0,T\right)} valued in \ensuremath{S^{1}\left(\Omega;L\left(H\right)\right)}},
\]
and defining the operator $A_{\Omega}\left(t\right)$ congruent to
$A\left(t\right)$ modulo smoothing operators in $\Omega$ depending
smoothly on $t\in\left[0,T\right)$, such that
\item to every compact subset $K$ of $\Omega\times\left[0,T\right)$ there
is a compact subset $K^{\prime}$ of the open half-plane $\mathbb{C}_{-}=\left\{ z\in\mathbb{C};\,\textrm{Re}z<0\right\} $
such that
\item the map
\[
z\cdot Id-\frac{a_{\Omega}\left(x,t,\xi\right)}{\left(1+\left|\xi\right|^{2}\right)^{\nicefrac{1}{2}}}:H\rightarrow H
\]
is a bijection (hence also a homeomorphism), for all $\left(x,t\right)\in K$,
$\xi\in\mathbb{R}^{n}$, $z\in\mathbb{C}\setminus K^{\prime}$.
\end{enumerate}
The regularity property that we are interested in is described in
the following theorem \cite[III, Theorem 1.2]{key-2}.
\begin{thm}
\label{thm:Regularity=00005BTreves=00005D}Let $\mathscr{O}$ be an
open subset of $X$, $u$ a $C^{\infty}$ function of $t$ in $\left[0,T\right)$
valued in $\mathscr{D}^{\prime}\left(X;H\right).$

Suppose that $u\left(0\right)\in C^{\infty}\left(\mathscr{O};H\right)$
and that
\[
\frac{\partial u}{\partial t}-A\left(t\right)u\in C^{\infty}\left(\mathscr{O}\times\left[0,T\right);H\right).
\]
Then $u\in C^{\infty}\left(\mathscr{O}\times\left[0,T\right);H\right)$.
\end{thm}

This theorem is one of the main ingredients in the proof of Proposition
\ref{prop:A is DtN}, which relates the symbol of the DtN operator
to the symbol of the connection Laplacian.

\section{\label{sec:Reconstruction-of-the}Reconstruction of the geometric
data on the boundary}

In this section we find the relation between full symbols of the DtN
operator and the connection Laplacian, and then use this relation
to prove Theorem \ref{thm:1}. We follow the general strategy used
in \cite{key-6} for the DtN operator associated with the Laplace-Beltrami
operator.

\subsection{Local factorization of the connection Laplacian}

Let us recall the construction of geodesic coordinates with respect
to the boundary. For each $q\in\partial N$, let $\gamma_{q}:\left[0,\epsilon\right)\rightarrow N$
denote the unit-speed geodesic starting at $q$ and normal to $\partial N$.
If $\left\{ x^{1},\dots,x^{n-1}\right\} $ are any local coordinates
for $\partial N$ near $p\in\partial N$, we can extend them smoothly
to functions on a neighborhood of $p$ in $N$ by letting them be
constant along each normal geodesic $\gamma_{q}$. If we then define
$x^{n}$ to be the parameter along each $\gamma_{q}$, it follows
that $\left\{ x^{1},\dots,x^{n}\right\} $ form coordinates for $N$
in some neighborhood of $p$, which we call the \emph{boundary normal
coordinates }determined by $\left\{ x^{1},\dots,x^{n-1}\right\} $.
In these coordinates $x^{n}>0$ in the interior of $N$, and $\partial N$
is locally characterized by $x^{n}=0$. The metric in these coordinates
has the form
\[
g=\stackrel[i,j=1]{n-1}{\sum}g_{ij}\left(x^{1},\dots,x^{n}\right)dx^{i}dx^{j}+\left(dx^{n}\right)^{2}.
\]
Let $\left(\epsilon_{1},\dots,\epsilon_{r}\right)$ be a smooth local
frame of $\left.E\right|_{\partial N}$ near $p\in\partial N$, we
can extend it to a smooth local frame $\left(\varepsilon_{1},\dots,\varepsilon_{r}\right)$
in a neighborhood of $p$ in $N$ by means of parallel transport along
each $\gamma_{q}$, i.e. for each $q$ we find the unique solution
to the parallel transport equation 
\begin{align*}
\nabla_{\dot{\gamma}_{q}}^{E}\varepsilon_{\alpha} & =0,\\
\left.\varepsilon_{\alpha}\right|_{\gamma_{q}\left(0\right)} & =\epsilon_{\alpha},\,for\,\alpha=1,\dots,r.
\end{align*}
We call this frame the \emph{boundary normal frame} determined by
$\left(\epsilon_{1},\dots,\epsilon_{r}\right)$. In boundary normal
coordinates we have then 
\begin{equation}
\nabla_{\nicefrac{\partial}{\partial x^{n}}}^{E}\varepsilon_{\alpha}=0.\label{eq:2.2}
\end{equation}
In local frame a section $u$ is represented as a vector-valued function
on $N$ and the connection $\nabla^{E}$ acts as 
\[
\nabla_{\cdot}^{E}u=du\left(\cdot\right)+\omega\left(\cdot\right)u,
\]
where $\omega=\omega_{k}dx^{k}$ denotes the matrix of connection
forms of $\nabla^{E}$. From \eqref{eq:2.2} we have then
\[
\omega_{n}=\omega\left(\frac{\partial}{\partial x^{n}}\right)=0.
\]

\begin{rem}
If the connection $\nabla^{E}$ is compatible with an inner product
$\left\langle \cdot,\cdot\right\rangle _{E}$ on $E$, we have the
following relation
\begin{equation}
d\left\langle u,v\right\rangle _{E}=\left\langle \nabla^{E}u,v\right\rangle _{E}+\left\langle u,\nabla^{E}v\right\rangle _{E}.\label{eq:3.2-1}
\end{equation}
Note that if the frame $\left(\epsilon_{1},\dots,\epsilon_{r}\right)$
is orthonormal then the associated boundary normal frame is also orthonormal,
since we have
\begin{gather*}
\frac{\partial}{\partial t}\left\langle \varepsilon_{\alpha},\varepsilon_{\beta}\right\rangle _{E}=\left\langle \nabla_{\dot{\gamma}}^{E}\varepsilon_{\alpha},\varepsilon_{\beta}\right\rangle _{E}+\left\langle \varepsilon_{\alpha},\nabla_{\dot{\gamma}}^{E}\varepsilon_{\beta}\right\rangle _{E}=0\\
\left.\left\langle \varepsilon_{\alpha},\varepsilon_{\beta}\right\rangle _{E}\right|_{t=0}=\left\langle \epsilon_{\alpha},\epsilon_{\beta}\right\rangle _{E}=0,\,\alpha\neq\beta,\\
\left.\left\langle \varepsilon_{\alpha},\varepsilon_{\alpha}\right\rangle _{E}\right|_{t=0}=\left\langle \epsilon_{\alpha},\epsilon_{\alpha}\right\rangle _{E}=1.
\end{gather*}
And applying \eqref{eq:3.2-1} to an orthonormal frame we obtain
\begin{multline*}
0=\frac{\partial}{\partial x^{i}}\left\langle \varepsilon_{\alpha},\varepsilon_{\beta}\right\rangle _{E}=\left\langle \varepsilon_{\tau}\omega_{\alpha}^{\tau}\left(\frac{\partial}{\partial x^{i}}\right),\varepsilon_{\beta}\right\rangle _{E}+\left\langle \varepsilon_{\alpha},\varepsilon_{\tau}\omega_{\beta}^{\tau}\left(\frac{\partial}{\partial x^{i}}\right)\right\rangle _{E}=\\
\left\langle \varepsilon_{\tau},\varepsilon_{\beta}\right\rangle _{E}\left(\omega_{\alpha}^{\tau}\right)_{i}+\left\langle \varepsilon_{\alpha},\varepsilon_{\tau}\right\rangle _{E}\left(\omega_{\beta}^{\tau}\right)_{i}=\left(\omega_{\alpha}^{\beta}\right)_{i}+\left(\omega_{\beta}^{\alpha}\right)_{i},\,for\,i=1,\dots,n,
\end{multline*}
and we see that $\left(\omega_{\beta}^{\alpha}\right)_{i}=-\left(\omega_{\alpha}^{\beta}\right)_{i}$,
i.e. the connection form is skew-symmetric.
\end{rem}

We will use further the following notation, $x=\left(x^{\prime},x^{n}\right)$,
$x^{\prime}=\left(x^{1},\dots,x^{n-1}\right)$, $\partial_{x^{j}}=\nicefrac{\partial}{\partial x^{j}}$,
$D_{x^{j}}=-i\partial_{x^{j}}$, and $D_{x}=\left(D_{x^{1}},\dots,D_{x^{n}}\right)$,
with similar definitions for $D_{x^{\prime}}$, $\partial_{x}$, and
$\partial_{x^{\prime}}$. The Einstein summation convention will be
assumed throughout this section.

In boundary normal coordinates and boundary normal frame, the connection
Laplacian is 
\[
\triangle^{E}u=\triangle u+g^{ij}\left[2\omega_{i}\partial_{x^{j}}u+\left(\left(\nabla_{\partial_{x^{i}}}\omega\right)\left(\partial_{x^{j}}\right)+\omega_{i}\omega_{j}\right)u\right],
\]
where $\left(g^{ij}\right)$ is the inverse of the matrix $\left(g_{ij}\right)$,
$\triangle$ is the (scalar) Laplace-Beltrami operator on $N$. We
can write
\[
Lu\coloneqq\triangle^{E}u=\left[\triangle+i\stackrel[j=1]{n-1}{\sum}V^{j}D_{x^{j}}+\tilde{Q}\right]u,
\]
where 
\begin{align*}
V^{j} & =2g^{jk}\omega_{k},\,j=1,\dots,n-1\\
\tilde{Q} & =\stackrel[i,j=1]{n-1}{\sum}g^{ij}\left[\left(\nabla_{\partial_{x^{i}}}\omega\right)\left(\partial_{x^{j}}\right)+\omega_{i}\omega_{j}\right].
\end{align*}

The Laplace-Beltrami operator in boundary normal coordinates can be
written as
\begin{multline*}
\triangle u=\stackrel[i,j=1]{n}{\sum}\varrho^{-\nicefrac{1}{2}}\partial_{x^{i}}\left(\varrho^{\nicefrac{1}{2}}g^{ij}u\right)=\partial_{x^{n}}\partial_{x^{n}}u+\frac{1}{2}\left(\partial_{x^{n}}\log\varrho\right)\partial_{x^{n}}u+\\
+\stackrel[i,j=1]{n-1}{\sum}\left(g^{ij}\partial_{x^{i}}\partial_{x^{j}}u+\frac{1}{2}g^{ij}\left(\partial_{x^{i}}\log\varrho\right)\partial_{x^{j}}u+\left(\partial_{x^{i}}g^{ij}\right)\partial_{x^{j}}u\right),
\end{multline*}
where $\varrho=\det\left(g_{ij}\right)$. Using this we can write
\begin{equation}
-L=-\triangle-iV^{l}D_{x^{l}}-\tilde{Q}=D_{x^{n}}^{2}+iF\left(x\right)D_{x^{n}}+Q\left(x,D_{x^{\prime}}\right),\label{eq:3.2}
\end{equation}
where
\begin{align*}
F\left(x\right) & =-\frac{1}{2}\stackrel[k,l=1]{n-1}{\sum}g^{kl}\left(x\right)\partial_{x^{n}}g_{kl}\left(x\right),\\
Q\left(x,D_{x^{\prime}}\right) & =\stackrel[k,l=1]{n-1}{\sum}g^{kl}\left(x\right)D_{x^{k}}D_{x^{l}}-i\stackrel[k,l=1]{n-1}{\sum}\left(\frac{1}{2}g^{kl}\left(x\right)\partial_{x^{n}}\log\varrho\left(x\right)+\partial_{x^{n}}g^{kl}\left(x\right)+V^{l}\right)D_{x^{l}}-\tilde{Q}.
\end{align*}

The Dirichlet-to-Neumann operator in boundary normal coordinates and
boundary normal frame is
\begin{equation}
\Lambda_{g,\nabla^{E}}\sigma=\left.\nabla_{\nu}^{E}u\right|_{\partial N}=\left.\left(\frac{\partial}{\partial x^{n}}+\omega_{n}\right)u\right|_{\partial N}=\left.\frac{\partial u}{\partial x^{n}}\right|_{\partial N}.\label{eq:DtN}
\end{equation}
The next Proposition shows that there is a useful local factorization
of the Laplacian into a composition of two first-order pseudodifferential
operators.
\begin{prop}
\label{prop:Factorization}There exists a pseudodifferential operator
$A\left(x,D_{x^{\prime}}\right)$ of order one in $x^{\prime}$ depending
smoothly on $x^{n}\in\left[0,T\right]$, for some $T>0$, such that
\begin{equation}
-L\equiv\left(D_{x^{n}}+iF\left(x\right)-iA\left(x,D_{x^{\prime}}\right)\right)\circ\left(D_{x^{n}}+iA\left(x,D_{x^{\prime}}\right)\right)\label{eq:3.6-1}
\end{equation}
modulo a smoothing operator. 
\end{prop}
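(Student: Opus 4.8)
The plan is to construct the symbol of $A$ recursively, order by order, so that the composition on the right-hand side of \eqref{eq:3.6-1} matches $-L$ to all orders. Write $-L = D_{x^n}^2 + iF(x)D_{x^n} + Q(x,D_{x^\prime})$ as in \eqref{eq:3.2}, and expand the desired factorization. Since $D_{x^n}$ is just differentiation in the normal variable and $A = A(x, D_{x^\prime})$ has symbol $a(x,\xi^\prime)$ depending smoothly on $x^n$, composing the two first-order factors and using the symbol-of-composition formula \eqref{eq:SymbOfComp} in the tangential variables gives
\[
\bigl(D_{x^n}+iF-iA\bigr)\circ\bigl(D_{x^n}+iA\bigr) = D_{x^n}^2 + iF D_{x^n} + \bigl(-i(D_{x^n}a) + a\# a + F a\bigr)(x,D_{x^\prime}),
\]
where $a\# a$ denotes the symbol $\sum_\alpha \frac{1}{\alpha!}\partial_{\xi^\prime}^\alpha a\, D_{x^\prime}^\alpha a$ of $A\circ A$ and $D_{x^n}a$ is the result of differentiating the symbol in $x^n$. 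So I need the symbol identity
\[
a\# a + Fa - i\,\partial_{x^n}a = q(x,\xi^\prime) \pmod{S^{-\infty}},
\]
where $q$ is the full symbol of $Q$; its principal part is $q_2 = g^{kl}(x)\xi_k^\prime\xi_l^\prime = |\xi^\prime|^2_{g}$ (an $r\times r$ scalar matrix), and there are explicitly computable lower-order parts $q_1$ (order $1$, matrix-valued because of the $V^l$ and the $\partial_{x^n}$-terms) and $q_0$ (order $0$, built from $\tilde Q$ and the metric terms).

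Next I would solve this equation for the formal symbol $a \sim \sum_{j\le 1} a_j$, with $a_j$ positive-homogeneous of degree $j$ in $\xi^\prime$. Collecting terms of homogeneity degree $2$: $a_1^2 = q_2 = |\xi^\prime|_g^2 \cdot \mathrm{Id}$, so I take $a_1 = |\xi^\prime|_g\cdot\mathrm{Id}$ (the positive square root; this is smooth and elliptic away from $\xi^\prime = 0$, and the cutoff-function mechanism described in the symbol-calculus subsection lets me regard it as a genuine element of the symbol class). Then, collecting homogeneity degree $1$: the terms involving $a_0$ are $a_1 a_0 + a_0 a_1 + (\text{lower terms in } a_1) = q_1$, which since $a_1$ is a positive scalar multiple of the identity reads $2|\xi^\prime|_g\, a_0 = q_1 - (\text{known terms from } a_1\# a_1 + Fa_1 - i\partial_{x^n}a_1)$, uniquely determining $a_0$. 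Continuing, at homogeneity degree $m$ for $m \le 0$ one gets $2|\xi^\prime|_g\, a_{m} = (\text{a universal polynomial expression in } a_1,\dots,a_{m+1} \text{ and finitely many } \xi^\prime\text{- and } x\text{-derivatives thereof, plus } q_m)$; since division by $2|\xi^\prime|_g$ is an invertible operation on homogeneous symbols of the appropriate degree, each $a_m$ is determined, and the construction proceeds to all orders. From the resulting formal symbol $\sum_{j\le1} a_j$ I assemble a true symbol $a \in S^1$ by the cutoff-summation procedure recalled in the section on pseudodifferential operators, and let $A = A(x,D_{x^\prime})$ be the corresponding PDO; by construction it depends smoothly on $x^n \in [0,T]$ for $T$ small enough that boundary normal coordinates are defined. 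The two factors then compose to $-L$ modulo an operator whose full symbol lies in $S^{-\infty}$, i.e.\ modulo a smoothing operator, which is exactly \eqref{eq:3.6-1}.

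The one point that needs care — and which I expect to be the main obstacle to state cleanly rather than to carry out — is the behaviour near $\xi^\prime = 0$: the symbol $|\xi^\prime|_g$ and all the $a_m$ are only positive-homogeneous, singular at the origin, so they are not literally in $S^m(\Omega)$. This is handled exactly as in the excerpt's discussion of classical symbols and cutoffs: one multiplies each homogeneous term by a cutoff $\chi_j(\xi^\prime)$ vanishing near the origin, chosen so the series converges in $S^1(\Omega)$, and the finite-order errors introduced by the cutoffs are of order $-\infty$ and so get absorbed into the smoothing remainder. The rest is bookkeeping: verifying that the recursion's right-hand sides are indeed homogeneous of the claimed degree and smooth in $x^n$, which follows from the smooth $x^n$-dependence of $g_{ij}$, $\varrho$, $\omega_k$ and hence of $F$ and the symbol of $Q$.
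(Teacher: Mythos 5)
Your construction follows the same route as the paper's: expand the composition using the tangential symbol calculus, equate homogeneous components, solve the quadratic equation at top order, divide by $2a_1$ at each lower order (legitimate because $a_1$ is a scalar matrix), and assemble a true symbol from the formal one by the cutoff procedure. Two points need correcting, though neither invalidates the method. First, the expansion of the composition has sign errors: since $i\left[D_{x^n},A\right]$ is the operator with symbol $+\partial_{x^n}a$ and $\left(iF\right)\left(iA\right)=-FA$, the symbol identity to be solved is $a\#a+\partial_{x^n}a-Fa=q$, not $a\#a+Fa-i\partial_{x^n}a=q$; note also that your displayed formula and your restatement of it disagree with each other, since $-iD_{x^n}a=-\partial_{x^n}a\neq-i\partial_{x^n}a$. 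These signs propagate into $a_0,a_{-1},\dots$ and therefore matter for the reconstruction formulas used in the proof of Theorem \ref{thm:1}. Second, you choose the positive root $a_1=+\left|\xi^{\prime}\right|_{g}\cdot\mathrm{Id}$, whereas the paper takes $a_1=-\sqrt{q_2}$. Either root yields a valid factorization, so the Proposition as stated is still proved; but only the negative root is compatible with the subsequent identification $\left.A\right|_{\partial N}\equiv\Lambda_{g,\nabla^{E}}$ (whose principal symbol is $-\left|\xi^{\prime}\right|_{g}$) and with the well-posedness of the backwards generalized heat equation for the second factor in Proposition \ref{prop:A is DtN}, which requires the spectrum of the principal symbol of $\tilde{A}-F$ to lie in the left half-plane. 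With your sign choice the roles of the two factors, and hence that argument, would have to be rearranged.
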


\begin{proof}
We use the symbol calculus to construct such an operator $A\left(x,D_{x^{\prime}}\right)$.
From \eqref{eq:3.2} we get
\begin{multline}
L+\left(D_{x^{n}}+iF-iA\right)\circ\left(D_{x^{n}}+iA\right)=-D_{x^{n}}^{2}-iFD_{x^{n}}-Q+D_{x^{n}}^{2}+iFD_{x^{n}}-\\
-iAD_{x^{n}}+iD_{x^{n}}A-FA+AA=AA-Q+i\left[D_{x^{n}},A\right]-FA.\label{eq:3.3}
\end{multline}
Let $a$ denote the full symbol of $A\left(x,D_{x^{\prime}}\right)$
and $q$ denote the full symbol of $Q\left(x,D_{x^{\prime}}\right)$.
Then, by \eqref{eq:SymbOfComp} the full symbol of \eqref{eq:3.3}
is
\[
\underset{K}{\sum}\frac{1}{K!}\partial_{\xi}^{K}aD_{x}^{K}a-q+\partial_{x^{n}}a-Fa,
\]
and $q$ splits into three terms
\begin{multline*}
q\left(x,\xi^{\prime}\right)=\stackrel[k,l=1]{n-1}{\sum}g^{kl}\left(x\right)\xi_{k}\xi_{l}-\\
-i\stackrel[l=1]{n-1}{\sum}\left[\stackrel[k=1]{n-1}{\sum}\left(\frac{1}{2}g^{kl}\left(x\right)\partial_{x^{k}}\log\delta\left(x\right)+\partial_{x^{k}}g^{kl}\left(x\right)\right)+V^{l}\right]\xi_{l}-\tilde{Q}=\\
=q_{2}\left(x,\xi^{\prime}\right)+q_{1}\left(x,\xi^{\prime}\right)+q_{0}\left(x\right).
\end{multline*}
Let us write 
\[
a\left(x,\xi^{\prime}\right)\sim\underset{j\leq1}{\sum}a_{j}\left(x,\xi^{\prime}\right),
\]
where $a_{j}$ are positive-homogeneous of degree $j$ in $\xi^{\prime}$,
that is we will define $A$ by a formal symbol. We shall determine
$a_{j}$ recursively so that \eqref{eq:3.3} is zero modulo symbols
of smoothing operators.

The homogeneous terms of degree two in \eqref{eq:3.3} give us
\[
a_{1}a_{1}-q_{2}=0,
\]
so we can choose 
\begin{equation}
a_{1}=-\sqrt{q_{2}}.\label{eq:3.4}
\end{equation}
Note that $q_{2}$ and, therefore, also $a_{1}$ are scalar matrices.
The terms of degree one in \eqref{eq:3.3} give us
\[
a_{0}a_{1}+a_{1}a_{0}+\stackrel[l]{n-1}{\sum}\partial_{\xi^{l}}a_{1}D_{x^{l}}a_{1}-q_{1}+\partial_{x^{n}}a_{1}-Fa_{1}=0,
\]
and using relation \eqref{eq:3.4}, we get
\[
-2\sqrt{q_{2}}a_{0}+\stackrel[l]{n-1}{\sum}\partial_{\xi^{l}}\sqrt{q_{2}}D_{x^{l}}\sqrt{q_{2}}-q_{1}-\partial_{x^{n}}\sqrt{q_{2}}+F\sqrt{q_{2}}=0,
\]
thus we have
\begin{equation}
a_{0}=\frac{1}{2\sqrt{q_{2}}}\left[\stackrel[l]{n-1}{\sum}\partial_{\xi^{l}}\sqrt{q_{2}}D_{x^{l}}\sqrt{q_{2}}-q_{1}-\partial_{x^{n}}\sqrt{q_{2}}+F\sqrt{q_{2}}\right].\label{eq:3.5}
\end{equation}
The terms of degree zero in \eqref{eq:3.3} give us
\begin{multline*}
a_{-1}a_{1}+a_{1}a_{-1}+\stackrel[l]{n-1}{\sum}\partial_{\xi^{l}}a_{1}D_{x^{l}}a_{0}+\stackrel[l]{n-1}{\sum}\partial_{\xi^{l}}a_{0}D_{x^{l}}a_{1}+\\
+\frac{1}{2}\stackrel[k,l]{n-1}{\sum}\partial_{\xi^{k}}\partial_{\xi^{l}}a_{1}D_{x^{k}}D_{x^{l}}a_{1}-q_{0}+\partial_{x^{n}}a_{0}-Fa_{0}=0,
\end{multline*}
and using relation \eqref{eq:3.4} again, we get
\begin{multline}
a_{-1}=\frac{1}{2\sqrt{q_{2}}}\left[-\stackrel[l]{n-1}{\sum}\partial_{\xi^{l}}\sqrt{q_{2}}D_{x^{l}}a_{0}-\stackrel[l]{n-1}{\sum}\partial_{\xi^{l}}a_{0}D_{x^{l}}\sqrt{q_{2}}+\right.\\
\left.+\frac{1}{2}\stackrel[k,l]{n-1}{\sum}\partial_{\xi^{k}}\partial_{\xi^{l}}\sqrt{q_{2}}D_{x^{k}}D_{x^{l}}\sqrt{q_{2}}-q_{0}+\partial_{x^{n}}a_{0}-Fa_{0}\right],\label{eq:3.6}
\end{multline}
where $a_{0}$ is given by \eqref{eq:3.5}. Continuing the recursion
for the terms of degree $m\leq-1$ we have
\[
-2\sqrt{q_{2}}a_{m-1}+\underset{\substack{j,k,K\\
m\le j,k\le1\\
\left|K\right|=j+k-m
}
}{\sum}\frac{1}{K!}\partial_{\xi}^{K}a_{j}D_{x}^{K}a_{k}+\partial_{x^{n}}a_{m}-Fa_{m}=0.
\]
Therefore we get
\begin{equation}
a_{m-1}=\frac{1}{2\sqrt{q_{2}}}\left[\underset{\substack{j,k,K\\
m\le j,k\le1\\
\left|K\right|=j+k-m
}
}{\sum}\frac{1}{K!}\partial_{\xi}^{K}a_{j}D_{x}^{K}a_{k}+\partial_{x^{n}}a_{m}-Fa_{m}\right].\label{eq:3.7-1}
\end{equation}
\end{proof}
\begin{rem}
\label{rem:Extension of a manifold}Let $p\in\partial N$. Note that
we can extend $\left(N,g,E,\nabla^{E}\right)$ along the boundary
$\partial N$ near $p$. This means that there is a vector bundle
$\left(\tilde{E},\nabla^{\tilde{E}}\right)$ over a Riemannian manifold
$\left(\tilde{N},\tilde{g}\right)$ such that $N$ is included in
$\tilde{N}$ isometrically, the restriction of $\left(\tilde{E},\nabla^{\tilde{E}}\right)$
to $N$ coincides with $\left(E,\nabla^{E}\right)$, and the point
$p$ lies in the interior of $\tilde{N}$. Clearly, near $p\in\tilde{N}$
there is an extension of boundary normal coordinates (so that $x^{n}\in$$\left(-\epsilon,\epsilon\right)$)
and boundary normal frame. Due to the construction of $A\left(x,D_{x^{\prime}}\right)$
one sees that the factorization in Proposition \ref{prop:Factorization}
extends to a neighborhood of $p$ in $\tilde{N}$, i.e. there exists
a PDO $\tilde{A}\left(x,D_{x^{\prime}}\right)$ of order one in $x^{\prime}$
depending smoothly on $x^{n}\in\left[-\tilde{T},\tilde{T}\right]$,
for some positive $\tilde{T}<T$, such that it coincides with $A\left(x,D_{x^{\prime}}\right)$
for $x^{n}\in\left[0,\tilde{T}\right]$.
\end{rem}

\subsection{The full symbol of the Dirichlet-to-Neumann operator $\Lambda_{g,\nabla^{E}}$.}

Our next step is to relate the operator $A\left(x,D_{x^{\prime}}\right)$
with the Dirichlet-to-Neumann operator $\Lambda_{g,\nabla^{E}}$.
It turns out that this relation is quite simple.
\begin{prop}
\label{prop:A is DtN}The operator $A$ satisfies the following relation
\[
\left.A\left(x,D_{x^{\prime}}\right)\right|_{\partial N}\sigma\equiv\left.\partial_{x^{n}}u\right|_{\partial N}=\Lambda_{g,\nabla^{E}}\sigma
\]
 modulo a smoothing operator.
\end{prop}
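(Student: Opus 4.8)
The plan is to exploit the factorization \eqref{eq:3.6-1} together with Treves' regularity theorem (Theorem~\ref{thm:Regularity=00005BTreves=00005D}) applied to the first-order factor $B\left(x^n\right)\coloneqq D_{x^n}+iA\left(x,D_{x'}\right)$, viewed as a ``heat'' operator in the variable $x^n$ playing the role of time. First I would fix $p\in\partial N$, work in the boundary normal coordinates and boundary normal frame on a slab $\partial N\times\left[0,T\right]$, and recall from \eqref{eq:DtN} that $\Lambda_{g,\nabla^E}\sigma=\left.\partial_{x^n}u\right|_{\partial N}$, where $u$ solves $Lu=0$, $\left.u\right|_{x^n=0}=\sigma$, and $u$ decays/stays bounded into the interior. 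The key point is that $A$ was constructed precisely so that $-L\equiv B'\circ B$ modulo smoothing, where $B'=D_{x^n}+iF-iA$; hence $B u$ satisfies $B'\left(Bu\right)=-Lu+\left(\text{smoothing applied to }u\right)\in C^\infty$.

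The main step is to verify that $B$ (equivalently the ODE $\partial_{x^n}v=-Av+\text{l.o.t.}$) satisfies the parabolicity hypotheses (1)--(3) preceding Theorem~\ref{thm:Regularity=00005BTreves=00005D}. Here I would use that the leading symbol $a_1=-\sqrt{q_2}$ from \eqref{eq:3.4} is a negative-definite scalar matrix (since $q_2\left(x,\xi'\right)=\left|\xi'\right|^2_{g}\,\mathrm{Id}>0$ for $\xi'\neq0$), so $a\left(x,x^n,\xi'\right)/\left(1+\left|\xi'\right|^2\right)^{1/2}$ has spectrum in a compact subset of the left half-plane $\mathbb{C}_-$, uniformly on compacta; this gives the required bijectivity of $z\cdot\mathrm{Id}-a/\left(1+\left|\xi'\right|^2\right)^{1/2}$ for $z$ outside a compact subset of $\mathbb{C}_-$. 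I would also invoke Remark~\ref{rem:Extension of a manifold} to extend everything slightly across $x^n=0$ if needed, so the slab is genuinely a neighborhood in the doubled manifold and the regularity theorem applies cleanly. With parabolicity in hand, apply Theorem~\ref{thm:Regularity=00005BTreves=00005D} to $w\coloneqq Bu$: we have $w$ a smooth function of $x^n$ valued in distributions on $\partial N$, $\partial_{x^n}w-\left(-iA+\text{stuff}\right)w=-iB'w/\!\cdots\in C^\infty$ wait — more precisely $B'w\in C^\infty$ means $\partial_{x^n}w=\left(iA-iF\right)w+C^\infty$, which is exactly the hypothesis of the theorem with $A\left(t\right)\rightsquigarrow iA-iF$; since this is an operator of order $1$ and parabolic, Treves' theorem forces $w\in C^\infty$ on the slab provided $\left.w\right|_{x^n=0}\in C^\infty$ — but that initial value is not known to be smooth, so instead I run the argument for the \emph{other} order: $u$ is smooth in the interior by elliptic regularity, so $w=Bu$ is smooth for $x^n>0$, and I propagate smoothness \emph{down} to $x^n=0$ using that $B'w\in C^\infty$ and $B'$ is backward-parabolic in $x^n$ (its leading symbol is $+\sqrt{q_2}$, spectrum in the right half-plane), which is precisely the time-reversed version of Treves' setup. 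Hence $w\in C^\infty$ up to $x^n=0$.

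Therefore $w=Bu=D_{x^n}u+iAu=-i\partial_{x^n}u+iAu$ is smooth up to the boundary, so $\left.\left(\partial_{x^n}u-Au\right)\right|_{x^n=0}$ is smooth, i.e. a smoothing remainder applied to $\sigma=\left.u\right|_{x^n=0}$. Reading off \eqref{eq:DtN}, this says $\Lambda_{g,\nabla^E}\sigma=\left.\partial_{x^n}u\right|_{\partial N}\equiv\left.A\left(x,D_{x'}\right)\right|_{\partial N}\sigma$ modulo a smoothing operator, which is the claim. I expect the main obstacle to be the bookkeeping around which of the two factors $B$, $B'$ is forward- versus backward-parabolic and correspondingly in which direction of $x^n$ one is allowed to propagate smoothness: one must be careful that the solution $u$ we want is the one that is regular coming \emph{from the interior}, and that the remainder $B'w\in C^\infty$ genuinely lets Treves' theorem carry smoothness across $x^n=0$ rather than only away from it; making the sign of $\operatorname{Re}\mathrm{spec}\left(a_1\right)$ match the orientation in hypothesis (3) is the crux. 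A secondary technical point is checking that the error terms hidden in ``$\equiv$ modulo smoothing'' in \eqref{eq:3.6-1}, when applied to the merely-$H^{s+1/2}$ section $u$, still produce genuinely $C^\infty$ output on the slab — this follows because smoothing operators map $\mathscr{D}'$ to $C^\infty$ and $u$ depends smoothly on $x^n$ with values in $\mathscr{D}'\left(\partial N;E\right)$.
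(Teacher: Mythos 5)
Your overall route is the same as the paper's: extend the data across $\partial N$ via Remark \ref{rem:Extension of a manifold}, set $v=\left(D_{x^{n}}+i\tilde{A}\right)u$, use the factorization to see that the other factor applied to $v$ is smooth, reverse the normal variable so that this factor becomes a forward generalized heat equation with generator $\tilde{A}-F$ and smooth initial data at $x^{n}=\tilde{T}$ (smooth by interior elliptic regularity), and then invoke Theorem \ref{thm:Regularity=00005BTreves=00005D} to propagate smoothness of $v$ down to and across $x^{n}=0$. Your identification of the forward/backward orientation as the crux, and your remark that the smoothing errors applied to $u\in C^{\infty}\left(\left[-\tilde{T},\tilde{T}\right];\mathscr{D}^{\prime}\right)$ stay smooth, both match the paper.

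There is, however, a genuine gap in your verification of the parabolicity hypotheses. You assert that because $a_{1}=-\sqrt{q_{2}}$ is negative for $\xi^{\prime}\neq0$, the quotient $a/\left(1+\left|\xi^{\prime}\right|^{2}\right)^{\nicefrac{1}{2}}$ has spectrum in a compact subset of the \emph{open} half-plane $\mathbb{C}_{-}$. This fails near $\xi^{\prime}=0$: there $\sqrt{q_{2}}/\left(1+\left|\xi^{\prime}\right|^{2}\right)^{\nicefrac{1}{2}}\rightarrow0$ while the lower-order part $a_{\leq0}/\left(1+\left|\xi^{\prime}\right|^{2}\right)^{\nicefrac{1}{2}}$ remains bounded but not small, so the eigenvalues of the quotient symbol can land on or to the right of the imaginary axis for small $\xi^{\prime}$. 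Condition (3) of Section \ref{subsec:Well-posedness-of-the} demands bijectivity of $z\cdot Id-a/\left(1+\left|\xi^{\prime}\right|^{2}\right)^{\nicefrac{1}{2}}$ for \emph{all} $\xi^{\prime}\in\mathbb{R}^{n-1}$ and all $z$ outside a compact $K^{\prime}\subset\mathbb{C}_{-}$, so this is not a removable technicality; the paper devotes a separate lemma to it. The fix is to replace $\tilde{A}-F$ by a congruent operator $B$ whose full symbol is $a_{1}+a_{\leq0}-Id\cdot\psi\left(\xi\right)$ with $\psi\left(\xi\right)=Ce^{-\left|\xi\right|^{2}/R^{2}}$ for constants $C,R$ determined by the ellipticity bounds \eqref{eq:3.10}--\eqref{eq:3.11}; since $\psi\in S^{-\infty}$ the congruence class is unchanged, but the shift pushes the spectrum of the quotient symbol uniformly into $\mathbb{C}_{-}$ for $\left|\xi\right|<R$, after which Theorem \ref{thm:Regularity=00005BTreves=00005D} applies. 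Your argument needs this (or an equivalent) adjustment before the regularity theorem can legitimately be invoked.
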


\begin{proof}
Let $p\in\partial N$. Using Remark \ref{rem:Extension of a manifold}
we consider an extension $\left(\tilde{N},\tilde{g},\tilde{E},\nabla^{\tilde{E}}\right)$
along $\partial N$ near $p$. Choose a coordinate chart $\left(\Omega,x^{\prime}\right)$
in $\partial N$ containing $p$ and denote by $\left(x^{\prime},x^{n}\right)$
the corresponding boundary normal coordinates in $\tilde{N}$. Let
$\Omega^{\prime}\subset\Omega$ be a precompact open subset and $\sigma$
a section in $H^{\nicefrac{1}{2}}\left(\left.E\right|_{\partial N}\right)$
compactly supported in $\Omega^{\prime}$. Consider a solution $u\in H^{1}\left(\tilde{E}\right)\subset\mathscr{D}^{\prime}\left(\tilde{N};\tilde{E}\right)$
to
\[
\begin{cases}
\triangle^{\tilde{E}}u=0, & \text{\textrm{in }}\tilde{N}\\
\left.u\right|_{\partial N}=\sigma.
\end{cases}
\]
 By Proposition \ref{prop:Factorization} and Remark \ref{rem:Extension of a manifold},
this problem is locally equivalent to the following system of equalities
for $u,v$:
\begin{gather*}
\left(Id\cdot D_{x^{n}}+i\tilde{A}\left(x,D_{x^{\prime}}\right)\right)u=v,\quad\left.u\right|_{x^{n}=0}=\sigma,\\
\left(Id\cdot D_{x^{n}}+iF\left(x\right)-i\tilde{A}\left(x,D_{x^{\prime}}\right)\right)v=h\in C^{\infty}\left(\left[-\tilde{T},\tilde{T}\right]\times\Omega^{\prime};\mathbb{C}^{r}\right).
\end{gather*}
The second equation above can be viewed as a backwards generalized
heat equation; making the substitution $t=\tilde{T}-x^{n}$, it is
equivalent to
\begin{equation}
Id\cdot\partial_{t}v-\left(\tilde{A}-F\right)v=-ih,\,\,t\in\left[0,2\tilde{T}\right]\label{eq:3.10-1}
\end{equation}
Since $h$ is smooth and $\tilde{A}-F$ depends smoothly on $t$,
by the transposed Leibniz formula we conclude that $v\in C^{\infty}\left(\left[-\tilde{T},\tilde{T}\right];\mathscr{D}^{\prime}\left(\Omega^{\prime};\mathbb{C}^{r}\right)\right)$
(cf. \cite[Remark 1.2]{key-2}).\textcolor{red}{{} }By elliptic regularity
for the Laplacian $\triangle^{E}$, $u$ (and therefore also $v$)
is smooth in the interior of $N$, and so $\left.v\right|_{x^{n}=T}$
is smooth. Now, if we were to show that the solution to \eqref{eq:3.10-1}
is smooth for $t\in\left[0,2\tilde{T}\right)$ then we are done. Indeed,
we would have
\[
Id\cdot D_{x^{n}}u+i\tilde{A}\left(x,D_{x^{\prime}}\right)u=v\in C^{\infty}\left(\left(-\tilde{T},\tilde{T}\right],\times\Omega^{\prime};\mathbb{C}^{r}\right),
\]
and in particular, the restriction to the boundary$\left.v\right|_{x^{n}=0}$
is smooth. Now, if we set $R\sigma=\left.v\right|_{\partial N}$,
then
\[
\left.Id\cdot D_{x^{n}}u\right|_{\partial N}=-i\left.\tilde{A}u\right|_{\partial N}+R\sigma=-i\left.\tilde{A}\right|_{\partial N}\sigma+R\sigma=-i\left.A\right|_{\partial N}\sigma+R\sigma,
\]
and we will get the desired result since $R$ is a smoothing operator.
So in order to conclude the proof it is left to prove that $v$ is
smooth for $t\in\left[0,2\tilde{T}\right)$. We will do this in the
subsequent lemma.
\end{proof}
\begin{lem*}
There is an operator $B$ in the congruence class of $\tilde{A}-F$
which satisfies the conditions for a well-posed heat equation in Section
\ref{subsec:Well-posedness-of-the}. As a result, the solution $v$
to the equation \eqref{eq:3.10-1} is smooth for $t\in\left[0,2\tilde{T}\right)$.
\end{lem*}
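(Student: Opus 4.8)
The plan is to exhibit a concrete operator $B$, congruent to $\tilde{A}-F$ modulo smoothing, whose principal symbol has spectrum contained in the open left half-plane $\mathbb{C}_{-}$ uniformly on compact sets, and then to verify the three bullet conditions of Section \ref{subsec:Well-posedness-of-the} for this $B$; once that is done, Theorem \ref{thm:Regularity=00005BTreves=00005D} applied to the equation \eqref{eq:3.10-1} (whose right-hand side $-ih$ is smooth and whose initial datum $\left.v\right|_{t=0}=\left.v\right|_{x^{n}=\tilde{T}}$ is smooth by interior elliptic regularity) immediately yields that $v\in C^{\infty}\left(\left[0,2\tilde{T}\right)\times\Omega^{\prime};\mathbb{C}^{r}\right)$, which is what was left to prove in Proposition \ref{prop:A is DtN}.

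First I would recall from \eqref{eq:3.4} that the principal symbol of $\tilde{A}$ is $a_{1}=-\sqrt{q_{2}}$, where $q_{2}\left(x,\xi^{\prime}\right)=\sum g^{kl}\left(x\right)\xi_{k}\xi_{l}\,\mathrm{Id}=\left|\xi^{\prime}\right|_{g\left(x\right)}^{2}\mathrm{Id}$ is a positive scalar matrix for $\xi^{\prime}\neq0$. Hence $a_{1}=-\left|\xi^{\prime}\right|_{g\left(x\right)}\mathrm{Id}$, which is a negative scalar multiple of the identity, so its eigenvalue $-\left|\xi^{\prime}\right|_{g\left(x\right)}$ lies on the negative real axis. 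The subtraction of $F\left(x\right)$ (a zeroth-order multiplication operator) does not affect the principal symbol. The only nuisance is the usual one: $a_{1}$ is not a genuine symbol because it is not smooth at $\xi^{\prime}=0$. So I would set $B$ to be the operator with symbol $\chi\left(\xi^{\prime}\right)a\left(x,\xi^{\prime}\right)$, where $a\sim\sum_{j\leq1}a_{j}$ is the full symbol of $\tilde{A}$ constructed in Proposition \ref{prop:Factorization}, $\chi$ is a smooth cutoff vanishing near the origin and equal to $1$ outside a ball, minus the multiplication by $F$; then $B$ is congruent to $\tilde{A}-F$ modulo a smoothing operator by construction, and it has a bona fide symbol $b_{\Omega}\left(x,t,\xi^{\prime}\right)\in S^{1}$ depending smoothly on $t=\tilde{T}-x^{n}$.

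Next I would check the three conditions. Condition (1) is the statement that $b_{\Omega}$ is a $C^{\infty}$ function of $t\in\left[0,2\tilde{T}\right)$ valued in $S^{1}$, which holds because the recursion \eqref{eq:3.7-1} produces symbols depending smoothly on $x^{n}$ and the cutoff $\chi$ is $t$-independent. For conditions (2) and (3), fix a compact $K\subset\Omega^{\prime}\times\left[0,2\tilde{T}\right)$; I must produce a compact $K'\subset\mathbb{C}_{-}$ such that $z\cdot\mathrm{Id}-b_{\Omega}\left(x,t,\xi^{\prime}\right)/\left(1+\left|\xi^{\prime}\right|^{2}\right)^{1/2}$ is invertible for all $\left(x,t\right)\in K$, all $\xi^{\prime}$, and all $z\notin K'$. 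Write $\mu\left(x,t,\xi^{\prime}\right)=b_{\Omega}/\left(1+\left|\xi^{\prime}\right|^{2}\right)^{1/2}$. For $\left|\xi^{\prime}\right|$ large this is $-\left|\xi^{\prime}\right|_{g}/\left(1+\left|\xi^{\prime}\right|^{2}\right)^{1/2}+O\left(\left|\xi^{\prime}\right|^{-1}\right)$, whose real part is bounded above by a negative constant $-c_{0}<0$ (using uniform positivity of $g$ on $K$), while for $\left|\xi^{\prime}\right|$ in a bounded set $\mu$ ranges over a compact subset of $\mathbb{C}$; together the closure of the range of all eigenvalues of $\mu$ over $\left(x,t,\xi^{\prime}\right)\in K\times\mathbb{R}^{n-1}$ is a compact set $S\subset\overline{\mathbb{C}_{-}}$. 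Since $\mu$ is scalar to leading order and the lower-order corrections are genuinely lower order, one checks $\mathrm{Re}$ of each eigenvalue stays $\leq-c_{0}/2<0$; thus $S$ is actually contained in $\mathbb{C}_{-}$, and I take $K'$ to be any compact neighbourhood of $S$ inside $\mathbb{C}_{-}$. For $z\notin K'$, $z$ is not an eigenvalue of $\mu\left(x,t,\xi^{\prime}\right)$, hence $z\cdot\mathrm{Id}-\mu$ is a bijection of $H=\mathbb{C}^{r}$.

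The main obstacle is the last point: controlling the \emph{subprincipal} contributions to the spectrum of $\mu$ for $\xi'$ in a compact annulus and near the cutoff region, since there $a_0$, $a_{-1}$ and $F$ are not negligible and there is no reason the eigenvalues of the full matrix $b_{\Omega}$ stay in $\mathbb{C}_{-}$ a priori. The clean way around this is to note that we have freedom in choosing the cutoff: enlarge the radius where $\chi$ vanishes so that on the support of $\chi$ the term $\left|\xi^{\prime}\right|_{g}$ dominates all lower-order terms of $b_{\Omega}$, forcing $\mathrm{Re}\,\mu\leq-c_{0}/2$ everywhere on that support; on the complementary bounded set $\mu\equiv0$, for which $z\cdot\mathrm{Id}$ is trivially invertible whenever $z\neq0$, so it suffices that $0\in K'$. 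Alternatively, since changing $B$ within its congruence class by a smoothing operator does not affect the hypothesis of Theorem \ref{thm:Regularity=00005BTreves=00005D}, one may simply absorb the bounded-frequency part into the smoothing error and keep only the large-frequency piece, where the estimate is immediate. Either way the verification reduces to the elementary fact that $-\sqrt{q_2}/\left(1+\left|\xi'\right|^2\right)^{1/2}$ has real part bounded away from $0$ at high frequency, which follows from uniform ellipticity of $g$ on the compact set $K$.
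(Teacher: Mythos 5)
Your overall strategy is the right one, and you correctly isolate the genuine difficulty: the behaviour of the normalized symbol $\mu=b_{\Omega}/\left(1+\left|\xi^{\prime}\right|^{2}\right)^{\nicefrac{1}{2}}$ at bounded frequencies. But both of your proposed resolutions fail, and for the same reason. The Treves condition demands a compact set $K^{\prime}$ contained in the \emph{open} half-plane $\mathbb{C}_{-}=\left\{ \textrm{Re}\,z<0\right\} $ such that $z\cdot Id-\mu$ is invertible for \emph{every} $\xi^{\prime}\in\mathbb{R}^{n-1}$ and every $z\notin K^{\prime}$. If you cut the symbol off to zero near $\xi^{\prime}=0$ (your first option) or discard the bounded-frequency piece as ``smoothing error'' (your second option), then $\mu\left(x,t,0\right)=0$, and $z\cdot Id-\mu$ fails to be invertible at $z=0$; since $0\notin\mathbb{C}_{-}$, you cannot put $0$ into $K^{\prime}$, so the hypothesis is violated. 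Even the model operator $-\left(-\triangle\right)^{\nicefrac{1}{2}}$ has this problem: its normalized symbol $-\left|\xi^{\prime}\right|/\left(1+\left|\xi^{\prime}\right|^{2}\right)^{\nicefrac{1}{2}}$ vanishes at the origin. The whole point of requiring the spectrum to sit in a compact subset of the \emph{open} left half-plane uniformly in $\xi^{\prime}$ is that the symbol must be pushed strictly into $\left\{ \textrm{Re}\,z<0\right\} $ at low frequencies, not merely regularized there. Your second option also conflates two things: you are free to replace $B$ by any congruent operator, but the well-posedness conditions are conditions on the actual symbol of the representative you choose, and that representative must satisfy the spectral condition at all $\xi^{\prime}$, including near $0$.

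The missing idea, which is the heart of the paper's proof, is to subtract from the full symbol a term $Id\cdot\psi\left(\xi^{\prime}\right)$ with $\psi\left(\xi^{\prime}\right)=Ce^{-\left|\xi^{\prime}\right|^{2}/R^{2}}$ and $C$ a sufficiently large positive constant. Since $\psi\in S^{-\infty}$, this is a smoothing perturbation, so the adjusted operator $B$ stays in the congruence class of $\tilde{A}-F$; but now for $\left|\xi^{\prime}\right|<R$ the scalar part of the normalized symbol has real part at most $-\left(C_{0}+\nicefrac{c}{2}\right)+\epsilon$, which strictly dominates the operator norm $C_{0}$ of the lower-order matrix part, while for $\left|\xi^{\prime}\right|\geq R$ the principal term $-\sqrt{q_{2}}$ already dominates (this part of your argument is correct and coincides with the paper's). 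With this modification the spectrum of $\mu$ is trapped in a compact subset of $\mathbb{C}_{-}$, and Theorem \ref{thm:Regularity=00005BTreves=00005D} applies exactly as you describe.
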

\begin{proof}
We will start by checking the conditions for the operator$\tilde{A}-F$.
If some of them will not be satisfied then we will adjust the symbol
of $\tilde{A}-F$ to obtain $B$. The first condition is satisfied
due to the construction of $\tilde{A}$. Denote by $a_{1}=-Id\cdot\sqrt{q_{2}}$
and $a_{\leq0}$ the principal part and the reminder part, respectively,
of the full symbol of $\tilde{A}-F$ ($F$ has order zero). Let $\left\Vert \right\Vert $
be the (operator) norm on complex $r\times r$-matrices induced from
the Hermitian norm on $\mathbb{C}^{r}$. Note that for any matrix
$M$ we have $\left\Vert M\right\Vert \geq\left|\lambda\right|$,
where $\lambda$ is any eigenvalue of $M$, which implies that the
matrix 
\[
Id\cdot z-M
\]
is non-degenerate when $\left|z\right|>\left\Vert M\right\Vert $.
Indeed, its eigenvalues are equal to $z-\lambda$ and we have 
\[
\left|z-\lambda\right|\geq\left|z\right|-\left|\lambda\right|\geq\left|z\right|-\left\Vert M\right\Vert >0.
\]
Since $\tilde{A}-F$ is an elliptic PDO of order $1$ in $\Omega$
and $\Omega^{\prime}$ is precompact we have the following uniform
in $\left[-\tilde{T},\tilde{T}\right]\times\Omega^{\prime}$ bounds
\begin{gather}
c\left|\xi\right|\leq\left|a_{1}\right|\leq C_{1}\left(1+\left|\xi\right|^{2}\right)^{\nicefrac{1}{2}};\label{eq:3.10}\\
\left|a_{\leq0}\right|\leq C_{0},\label{eq:3.11}
\end{gather}
where $c$, $C_{0}$, and $C_{1}$ are some positive constants. Using
this we see that the matrix 
\begin{equation}
Id\cdot z-\frac{a_{1}+a_{\leq0}}{\left(1+\left|\xi\right|^{2}\right)^{\nicefrac{1}{2}}}\label{eq:3.12}
\end{equation}
is non-degenerate for $\left|z\right|>C_{1}+C_{0}$. Indeed, from
\eqref{eq:3.10},\eqref{eq:3.11} the norm of the quotient term is
bounded from above by the constant $C_{1}+C_{0}$. It is left to check
if \eqref{eq:3.12} is non-degenerate when $z=x+iy$ with $-\epsilon<x$,
for some sufficiently small $\epsilon>0$. We have
\begin{equation}
Id\cdot\left(x+iy+\frac{\sqrt{q_{2}}}{\left(1+\left|\xi\right|^{2}\right)^{\nicefrac{1}{2}}}\right)-\frac{a_{\leq0}}{\left(1+\left|\xi\right|^{2}\right)^{\nicefrac{1}{2}}}=Id\cdot\rho-A_{\leq0},\label{eq:3.14}
\end{equation}
where
\[
\rho=x+iy+\frac{\sqrt{q_{2}}}{\left(1+\left|\xi\right|^{2}\right)^{\nicefrac{1}{2}}},
\]
and
\[
A_{\leq0}=\frac{a_{\leq0}}{\left(1+\left|\xi\right|^{2}\right)^{\nicefrac{1}{2}}}.
\]
We know that the matrix \eqref{eq:3.14} is non-degenerate when $\left|\rho\right|>\left\Vert A_{\leq0}\right\Vert $.
Since $q_{2}$ can be arbitrarily small for $\xi$ close to $0$ we
cannot guarantee that $\rho$ will not vanish for any small $\epsilon>0$.
Therefore, we have to adjust the symbol of $\tilde{A}-F$. From \eqref{eq:3.10}
we obtain 
\[
\frac{\sqrt{q_{2}}}{\left(1+\left|\xi\right|^{2}\right)^{\nicefrac{1}{2}}}\geq\frac{c}{2},
\]
when $\left|\xi\right|\geq1$. Hence, when $\epsilon<\frac{c}{2}$
we have 
\[
\left|\rho\right|^{2}=y^{2}+\left(x+\frac{\sqrt{q_{2}}}{\left(1+\left|\xi\right|^{2}\right)^{\nicefrac{1}{2}}}\right)^{2}>\left(\frac{c}{2}-\epsilon\right)^{2},
\]
for $\left|\xi\right|\geq1$. From \eqref{eq:3.11} we see that 
\[
\left\Vert A_{\leq0}\right\Vert ^{2}\leq\frac{C_{0}^{2}}{1+\left|\xi\right|^{2}},
\]
which is less than $\left(\frac{c}{2}-\epsilon\right)^{2}$ when $\left|\xi\right|\geq C_{0}\left(\frac{c}{2}-\epsilon\right)^{-1}$.
Let $R=\max\left(1,C_{0}\left(\frac{c}{2}-\epsilon\right)^{-1}\right)$,
then the matrix \eqref{eq:3.14} is non-degenerate for $\left|\xi\right|\geq R$.
On the other hand, for $\left|\xi\right|<R$ we know that $\left\Vert A_{\leq0}\right\Vert \leq C_{0}$.
Now let us consider the congruent operator $B\left(t,x^{\prime},D_{x^{\prime}}\right)$
by adjusting the full symbol as
\[
a_{1}+a_{\leq0}-Id\cdot\psi\left(\xi\right),
\]
where $\psi\left(\xi\right)=Ce^{-\frac{\left|\xi\right|^{2}}{R^{2}}}$
and the constant $C$ is equal to $e\left(1+R^{2}\right)^{\nicefrac{1}{2}}\left(C_{0}+\frac{c}{2}\right)$.
One sees that now for $\left|\xi\right|<R$ we have 
\[
\left|\rho\right|^{2}=y^{2}+\left(x+\frac{\sqrt{q_{2}}+\psi\left(\xi\right)}{\left(1+\left|\xi\right|^{2}\right)^{\nicefrac{1}{2}}}\right)^{2}>\left(C_{0}+\frac{c}{2}-\epsilon\right)^{2}>C_{0}^{2}\geq\left\Vert A_{\leq0}\right\Vert ,
\]
so the matrix \eqref{eq:3.12} for the adjusted operator is non-degenerate
in this case. In the other cases it is clear that it remains non-degenerate,
which guarantees that the operator $B\left(t,x^{\prime},D_{x^{\prime}}\right)$
satisfies the conditions for a well-posed heat equation. The second
part of the lemma follows immediately. Indeed, since $B$ is congruent
to $\tilde{A}-F$ we have 
\[
Id\cdot\partial_{t}v-B\left(t\right)v\in C^{\infty}\left(\left[-\tilde{T},\tilde{T}\right]\times\Omega^{\prime};\mathbb{C}^{r}\right)
\]
 and, therefore, by Theorem \eqref{thm:Regularity=00005BTreves=00005D}
the solution $v$ is smooth for $t\in\left[0,2\tilde{T}\right)$. 
\end{proof}
\begin{cor}
\label{cor:Symbol of DtN} The full symbol of the DtN operator $\Lambda_{g,\nabla^{E}}$
is the same as the full symbol of $\left.A\left(x,D_{x^{\prime}}\right)\right|_{\partial N}$.
In particular, the DtN operator is a classical elliptic pseudodifferential
operator of order $1$.
\end{cor}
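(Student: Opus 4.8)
The plan is to read this off directly from Proposition~\ref{prop:A is DtN} together with the explicit construction carried out in the proof of Proposition~\ref{prop:Factorization}. By Proposition~\ref{prop:A is DtN} we have
\[
\Lambda_{g,\nabla^{E}}=\left.A\left(x,D_{x^{\prime}}\right)\right|_{\partial N}+R,
\]
where $R$ is a smoothing operator. The kernel of a smoothing operator is $C^{\infty}$, so $R$ admits an amplitude in $S^{-\infty}\left(\Omega,\Omega\right)$; equivalently, the full symbol of $R$ is the zero class modulo $S^{-\infty}\left(\Omega\right)$. Since the full symbol of a PDO is only defined as an element of $\dot{S}^{m}\left(\Omega\right)/S^{-\infty}\left(\Omega\right)$, a smoothing correction is invisible at this level, and therefore the full symbols of $\Lambda_{g,\nabla^{E}}$ and of $\left.A\left(x,D_{x^{\prime}}\right)\right|_{\partial N}$ coincide as formal symbols. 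This gives the first assertion.

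For the second assertion I would recall that in the proof of Proposition~\ref{prop:Factorization} the operator $A\left(x,D_{x^{\prime}}\right)$ was defined via the formal symbol $a\left(x,\xi^{\prime}\right)\sim\sum_{j\le1}a_{j}\left(x,\xi^{\prime}\right)$, where each $a_{j}$ is positive-homogeneous of degree $j$ in $\xi^{\prime}$ and the orders decrease by integers; this is exactly the definition of a classical symbol of order one. Restriction to $x^{n}=0$ of a PDO depending smoothly on the parameter $x^{n}$ is a standard operation that simply restricts the symbol, so $\left.A\right|_{\partial N}$ has classical symbol $\sum_{j\le1}\left.a_{j}\right|_{x^{n}=0}$ of order one, and by the first part the same is true of $\Lambda_{g,\nabla^{E}}$.

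Finally, ellipticity follows from the leading term: by \eqref{eq:3.4} the principal symbol of $\left.A\right|_{\partial N}$ is $\left.a_{1}\right|_{x^{n}=0}=-\sqrt{\left.q_{2}\right|_{x^{n}=0}}\;Id=-\left|\xi^{\prime}\right|_{\left.g\right|_{\partial N}}Id$, which is an invertible $r\times r$ matrix for every $\xi^{\prime}\neq0$; hence $\Lambda_{g,\nabla^{E}}$ is elliptic of order one. I do not expect any genuine obstacle here, since the statement is a bookkeeping consequence of the two preceding propositions; the only points deserving a word of care are that ``full symbol'' is well defined only modulo $S^{-\infty}\left(\Omega\right)$ (so the smoothing remainder drops out) and that restricting an $x^{n}$-dependent PDO to the boundary yields a PDO with the expected restricted symbol, both of which are standard facts already implicit in the material above.
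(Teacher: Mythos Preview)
Your proposal is correct and matches the paper's approach: the corollary is stated without proof in the paper because it is an immediate consequence of Proposition~\ref{prop:A is DtN} (which gives $\Lambda_{g,\nabla^{E}}\equiv\left.A\right|_{\partial N}$ modulo smoothing) and the explicit classical formal symbol $\sum_{j\le1}a_{j}$ with $a_{1}=-\sqrt{q_{2}}\,Id$ constructed in Proposition~\ref{prop:Factorization}. Your write-up simply makes explicit the two points the paper leaves implicit---that a smoothing remainder does not affect the full symbol and that the principal symbol $-\left|\xi^{\prime}\right|_{\left.g\right|_{\partial N}}Id$ is invertible---and there is nothing to add.
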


We are now in a position to recover the geometric data (the metric
$g$ and the connection $\nabla^{E}$) on the boundary $\partial N$
from the given DtN operator $\Lambda$.

\subsection{Proof of Theorem \ref{thm:1}}

Let $\left\{ x^{1},\dots,x^{n}\right\} $ denote boundary normal coordinates
associated with $\left\{ x^{1},\dots,x^{n-1}\right\} $ and $\left(\varepsilon_{1},\dots,\varepsilon_{r}\right)$
denote boundary normal frame defined by $\left(\epsilon_{1},\dots,\epsilon_{r}\right)$.
Note that since 

\[
\partial_{x^{n}}g_{kl}=-\underset{\eta,\mu}{\sum}g_{k\eta}\left(\partial_{x^{n}}g^{\eta\mu}\right)g_{\mu l},
\]
it also suffices to determine the inverse matrix $\left(g^{kl}\right)$
and its normal derivatives instead of $\left(g_{kl}\right)$ and its
normal derivatives. Using Corollary \prettyref{cor:Symbol of DtN}
and \eqref{eq:3.4} we get
\[
\lambda_{1}=-\sqrt{q_{2}},
\]
and we have at any $p\in\partial N$,
\[
q_{2}\left(p,\xi^{\prime}\right)=\stackrel[k,l=1]{n-1}{\sum}g^{kl}\left(p\right)\xi_{k}\xi_{l}=\left(\frac{\textrm{Trace}\,\lambda_{1}}{r}\right)^{2}.
\]
Thus, the principal symbol of the DtN operator determines $g^{kl}$
at each boundary point $p$.

Next from Corollary \prettyref{cor:Symbol of DtN} and \eqref{eq:3.5}
we have 
\begin{multline}
\lambda_{0}=\frac{1}{2\sqrt{q_{2}}}\left[\stackrel[l]{n-1}{\sum}\partial_{\xi^{l}}\sqrt{q_{2}}D_{x^{l}}\sqrt{q_{2}}-q_{1}-\partial_{x^{n}}\sqrt{q_{2}}+F\sqrt{q_{2}}\right]=\\
=-\frac{1}{2\sqrt{q_{2}}}\partial_{x^{n}}\sqrt{q_{2}}+\frac{i}{2\sqrt{q_{2}}}\stackrel[l=1]{n-1}{\sum}V^{l}\xi_{l}-\frac{1}{4}\stackrel[k,l=1]{n-1}{\sum}g^{kl}\left(x\right)\partial_{x^{n}}g_{kl}\left(x\right)+T_{0},\label{eq:3.7}
\end{multline}
where 
\[
T_{0}=\frac{1}{2\sqrt{q_{2}}}\stackrel[l]{n-1}{\sum}\partial_{\xi^{l}}\sqrt{q_{2}}D_{x^{l}}\sqrt{q_{2}}+\frac{i}{2\sqrt{q_{2}}}\stackrel[k,l=1]{n-1}{\sum}\left(\frac{1}{2}g^{kl}\left(x\right)\partial_{x^{k}}\log\delta\left(x\right)+\partial_{x^{k}}g^{kl}\left(x\right)\right)\xi_{l}
\]
is an expression involving only $g_{kl}$, $g^{kl}$, and their tangential
derivatives along $\partial N$. Note that $\sum g^{kl}g_{kl}=n-1$,
and so
\[
-\stackrel[k,l=1]{n-1}{\sum}g^{kl}\left(x\right)\partial_{x^{n}}g_{kl}\left(x\right)=\stackrel[k,l=1]{n-1}{\sum}g_{kl}\left(x\right)\partial_{x^{n}}g^{kl}\left(x\right).
\]
If we set $h^{kl}=\partial_{x^{n}}g^{kl}$, $h=\sum g_{kl}h^{kl}$,
and $\left\Vert \xi^{\prime}\right\Vert ^{2}=\sum g^{kl}\xi_{k}\xi_{l}=q_{2}$,
we can rewrite \eqref{eq:3.7} in the form
\begin{equation}
\lambda_{0}\left(\xi\right)=-\frac{1}{4\left\Vert \xi^{\prime}\right\Vert ^{2}}\stackrel[k,l=1]{n-1}{\sum}\left(h^{kl}-hg^{kl}\right)\xi_{k}\xi_{l}+\frac{i}{2\left\Vert \xi^{\prime}\right\Vert }\stackrel[l=1]{n-1}{\sum}V^{l}\xi_{l}+T_{0}.\label{eq:3.17}
\end{equation}
From antisymmetric part $\lambda_{0}\left(\xi\right)-\lambda_{0}\left(-\xi\right)$
we obtain 
\[
\frac{i}{\left\Vert \xi^{\prime}\right\Vert }\stackrel[l=1]{n-1}{\sum}V^{l}\xi_{l},
\]
which allows us to determine $V^{l}$ and multiplying by $\nicefrac{1}{2}g_{kl}$
we get
\[
\frac{1}{2}g_{kl}V^{l}=g_{kl}g^{lj}\omega_{j}=\delta_{k}^{j}\omega_{j}=\omega_{k},
\]
so we determined the connection matrix $\omega_{k}$ for $k=1,\dots,n-1$
on the boundary $\partial N$. Let us look at the remaining terms.
Only the first term in \eqref{eq:3.17} is unknown yet. But we can
recover it from all the other terms. Thus, we can recover the quadratic
form
\[
\kappa^{kl}=h^{kl}-hg^{kl}.
\]
When $n>2$, this in turn determines $h^{kl}=\partial_{x^{n}}g^{kl}$,
since 
\begin{equation}
h^{kl}=\kappa^{kl}+\frac{1}{2-n}\left(\underset{p,q}{\sum}\kappa^{pq}g_{pq}\right)g^{kl}.\label{eq:3.8}
\end{equation}

Now let us look at \eqref{eq:3.6}. We have
\begin{multline}
\lambda_{-1}=\frac{1}{2\sqrt{q_{2}}}\left[-\stackrel[l]{n-1}{\sum}\partial_{\xi^{l}}\sqrt{q_{2}}D_{x^{l}}\lambda_{0}-\stackrel[l]{n-1}{\sum}\partial_{\xi^{l}}\lambda_{0}D_{x^{l}}\sqrt{q_{2}}+\right.\\
\left.+\frac{1}{2}\stackrel[k,l]{n-1}{\sum}\partial_{\xi^{k}}\partial_{\xi^{l}}\sqrt{q_{2}}D_{x^{k}}D_{x^{l}}\sqrt{q_{2}}-q_{0}+\partial_{x^{n}}\lambda_{0}-F\lambda_{0}\right]\\
=-\frac{1}{2\sqrt{q_{2}}}q_{0}+\frac{1}{2\sqrt{q_{2}}}\partial_{x^{n}}\lambda_{0}+T_{-1}\left(g_{kl},\omega_{l}\right)=\\
-\frac{1}{8\left\Vert \xi^{\prime}\right\Vert ^{3}}\stackrel[k,l=1]{n-1}{\sum}\partial_{x^{n}}\kappa^{kl}\xi_{k}\xi_{l}+\frac{i}{4\left\Vert \xi^{\prime}\right\Vert ^{2}}\stackrel[l=1]{n-1}{\sum}\partial_{x^{n}}V^{l}\xi_{l}+T_{-1}\left(g_{kl},\omega_{l}\right),\label{eq:3.9}
\end{multline}
where $T_{-1}\left(g_{kl},\omega_{l}\right)$ is an expression involving
only $g_{kl},g^{kl}$, their first normal derivatives and the boundary
values of $\omega_{l}$. Here again looking at the antisymmetric part
of $\lambda_{-1}$ we determine $\partial_{x^{n}}V^{l}$ and subsequently
$\partial_{x^{n}}\omega_{k}$ for $k=1,\dots,n-1$. The first term
of \eqref{eq:3.9} is again determined by the other terms, which allows
us to recover
\[
-\frac{1}{8\left\Vert \xi^{\prime}\right\Vert ^{3}}\stackrel[k,l=1]{n-1}{\sum}\partial_{x^{n}}\kappa^{kl}\xi_{k}\xi_{l},
\]
and hence also $\partial_{x^{n}}\kappa^{kl}$. Due to \eqref{eq:3.8}
the latter determines $\partial_{x^{n}}h^{kl}=\partial_{x^{n}}^{2}g^{kl}$.

Proceeding by induction, let $m\le-1$, and suppose we have shown
that, when $-1\ge j\ge m$,
\[
\lambda_{j}=-\frac{1}{\left\Vert 2\xi^{\prime}\right\Vert ^{2-j}}\stackrel[k,l=1]{n-1}{\sum}\left(\partial_{x^{n}}^{\left|j\right|}\kappa^{kl}\right)\xi_{k}\xi_{l}+\frac{i}{\left\Vert 2\xi^{\prime}\right\Vert ^{1-j}}\stackrel[l=1]{n-1}{\sum}\partial_{x^{n}}^{\left|j\right|}V^{l}\xi_{l}+T_{j}\left(g_{kl},\omega_{l}\right),
\]
where $T_{j}\left(g_{kl},\omega_{l}\right)$ involves only the boundary
values of $g_{kl},g^{kl}$, their normal derivatives of order at most
$\left|j\right|$, and also for $j\le-1$ involves the boundary values
of $\omega_{l}$, and their normal derivatives of order at most $\left|j\right|-1$.
From Corollary \prettyref{cor:Symbol of DtN} and \eqref{eq:3.7-1}
we get 
\begin{multline*}
\lambda_{m-1}=\frac{1}{2\sqrt{q_{2}}}\left[\partial_{x^{n}}\lambda_{m}+\underset{\substack{j,k,K\\
m\le j,k\le1\\
\left|K\right|=j+k-m
}
}{\sum}\frac{1}{K!}\partial_{\xi}^{K}\lambda_{j}D_{x}^{K}\lambda_{k}-F\lambda_{m}\right]=\\
=-\frac{1}{\left\Vert 2\xi^{\prime}\right\Vert ^{3-m}}\stackrel[k,l=1]{n-1}{\sum}\left(\partial_{x^{n}}^{\left|m-1\right|}\kappa^{kl}\right)\xi_{k}\xi_{l}+\frac{i}{\left\Vert 2\xi^{\prime}\right\Vert ^{2-m}}\stackrel[l=1]{n-1}{\sum}\partial_{x^{n}}^{\left|m-1\right|}V^{l}\xi_{l}+T_{m-1}\left(g_{kl},\omega_{l}\right).
\end{multline*}
Taking the antisymmetric part of $\lambda_{m-1}$ we can determine
$\partial_{x^{n}}^{\left|m-1\right|}V^{l}$ and, therefore, also $\partial_{x^{n}}^{\left|m-1\right|}\omega_{k}$.
The first term is again determined by the other terms. So we can recover
$\partial_{x^{n}}^{\left|m-1\right|}\kappa^{kl}$ and thus $\partial_{x^{n}}^{\left|m-2\right|}g^{kl}$
also. This completes the induction step.

\subsection{Gauge equivalence of the reconstructed connection}

Let $\pi_{E}:E\rightarrow X$ and $\pi_{F}:F\rightarrow Y$ be two
smooth vector bundles over smooth manifolds. We say that an isomorphism
$\phi:E\rightarrow F$ covers a diffeomorphism $\psi:X\rightarrow Y$
if the relation
\[
\pi_{F}\circ\phi=\psi\circ\pi_{E}
\]
holds. Note that any isomorphism $\phi$ covers the unique underlying
diffeomorphism of the bases defined by $\psi=\pi_{F}\circ\phi\circ\pi_{E}^{-1}$.
One of the corollaries of the main result is the following Proposition
on gauge equivalence. 
\begin{prop}
Let $\left(N_{i},g_{i},E_{i},\nabla^{i}\right)$, where $i=1,2$,
be two Euclidean smooth vector bundles defined over connected compact
Riemannian manifolds with boundary. Suppose that for some open subsets
$\Sigma_{i}\subset\partial N_{i}$ there exists a vector bundle isomorphism
$\phi:\left.E_{1}\right|_{\Sigma_{1}}\rightarrow\left.E_{2}\right|_{\Sigma_{2}}$
that intertwines with the corresponding Dirichlet-to-Neumann operators
$\Lambda_{\Sigma_{1}}$ and $\Lambda_{\Sigma_{2}}$, i.e. the relation
$\phi\circ\Lambda_{\Sigma_{1}}\left(s\right)\circ\psi^{-1}=\Lambda_{\Sigma_{2}}\left(\phi\circ s\circ\psi^{-1}\right)$
holds for any smooth section $s$ of $\left.E_{1}\right|_{\Sigma_{1}}$.
Then the isomorphism $\phi$ is a gauge equivalence, $\phi^{*}\nabla^{2}=\nabla^{1}$,
and covers an isometry $\psi:\left(\Sigma_{1},g_{1}\right)\rightarrow\left(\Sigma_{2},g_{2}\right)$.
\end{prop}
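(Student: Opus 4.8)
The plan is to deduce the statement from Theorems \ref{thm:1} and \ref{thm:2}: the intertwining hypothesis will force the full symbols of $\Lambda_{\Sigma_1}$ and $\Lambda_{\Sigma_2}$ to coincide in appropriately matched local coordinates and frames, after which the explicit reconstruction formulas of those theorems immediately give the coincidence of the geometric data.

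Fix $p_1\in\Sigma_1$ and put $p_2=\psi\left(p_1\right)$. First I would choose local coordinates $\left(x^1,\dots,x^{n-1}\right)$ for a neighbourhood $W_1\subset\Sigma_1$ of $p_1$ and a local frame $\left(\epsilon_1,\dots,\epsilon_r\right)$ of $E_1$ over $W_1$, and transport this data by $\psi$ and $\phi$: set $W_2=\psi\left(W_1\right)$, take $\left(x^1\circ\psi^{-1},\dots,x^{n-1}\circ\psi^{-1}\right)$ as coordinates on $W_2$ and $\left(\phi\circ\epsilon_1\circ\psi^{-1},\dots,\phi\circ\epsilon_r\circ\psi^{-1}\right)$ as a frame of $E_2$ over $W_2$. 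Writing $\Psi_*s=\phi\circ s\circ\psi^{-1}$ for the induced linear isomorphism $\Gamma\left(\left.E_1\right|_{\Sigma_1}\right)\to\Gamma\left(\left.E_2\right|_{\Sigma_2}\right)$, the hypothesis says exactly that $\Lambda_{\Sigma_2}=\Psi_*\circ\Lambda_{\Sigma_1}\circ\Psi_*^{-1}$. A direct check shows that in the matched charts and frames the conjugating operator $\Psi_*$ is represented by the identity matrix of operators: a section of $E_1$ over $W_1$ with coordinate representation $s\left(x'\right)$ is carried by $\Psi_*$ to the section of $E_2$ over $W_2$ whose coordinate representation, after the identification $x'\leftrightarrow x'\circ\psi^{-1}$, is again $s\left(x'\right)$. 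Hence, using Corollary \ref{cor:Symbol of DtN} so that a full symbol is defined, the matrix of scalar pseudodifferential operators representing $\Lambda_{\Sigma_1}$ in $\left(x^1,\dots,x^{n-1}\right)$ and $\left(\epsilon_\alpha\right)$ coincides with the one representing $\Lambda_{\Sigma_2}$ in the transported data; in particular the full symbols $\left\{\lambda_j,\,j\le1\right\}$ agree under the chart identification, and so do all their tangential derivatives at the corresponding points $p_1\leftrightarrow p_2$.

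Since $\psi$ is a diffeomorphism we have $\dim N_1=\dim N_2=:n$, and since $\phi$ is an isomorphism the ranks of $E_1$ and $E_2$ coincide, so I may now apply Theorem \ref{thm:1} when $n\ge3$ and Theorem \ref{thm:2} when $n=2$. Each of these theorems expresses the full Taylor series (respectively, when $n=2$, the value) of $g_i$ and $\nabla^i$ at $p_i$, written in the boundary normal coordinates and boundary normal frame determined by the chosen boundary data, by one and the same universal formula in $\left\{\lambda_j\right\}$ and their tangential derivatives at $p_i$. As these symbol data coincide, so do the Taylor series (respectively the values). Taking the zeroth-order term of the metric: in boundary normal coordinates $g_i=\sum_{k,l<n}\left(g_i\right)_{kl}\,dx^k\,dx^l+\left(dx^n\right)^2$ with $\Sigma_i=\{x^n=0\}$, so the induced metrics satisfy $\psi^*\left(\left.g_2\right|_{\Sigma_2}\right)=\left.g_1\right|_{\Sigma_1}$ on $W_1$; as $p_1$ was arbitrary, $\psi\colon\left(\Sigma_1,g_1\right)\to\left(\Sigma_2,g_2\right)$ is an isometry. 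Taking the zeroth-order term of the connection: the connection one-forms of $\nabla^1$ in the frame $\left(\epsilon_\alpha\right)$ equal those of $\nabla^2$ in the frame $\left(\phi\circ\epsilon_\alpha\circ\psi^{-1}\right)$ along $\Sigma_i$ (the normal component of each vanishes in a boundary normal frame), which is precisely the assertion that $\phi$ is a gauge equivalence, $\phi^*\nabla^2=\nabla^1$, on $\left.E_1\right|_{\Sigma_1}$.

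I expect the only genuinely delicate point to be the first step, namely verifying that conjugation by the bundle isomorphism $\phi$ over the diffeomorphism $\psi$ leaves the full symbol literally unchanged once coordinates and frames are matched, so that Theorems \ref{thm:1} and \ref{thm:2} apply verbatim; everything afterwards is a direct substitution into the explicit formulas of those theorems. One may also remark that when $n\ge3$ Theorem \ref{thm:1} in fact yields more --- all normal derivatives of $g_i$ and of the connection one-forms agree at corresponding boundary points --- but since $\phi$ and $\psi$ are defined only on the boundary this does not extend the gauge equivalence off $\Sigma_1$.
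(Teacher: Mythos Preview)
Your proposal is correct and follows essentially the same approach as the paper: both arguments rewrite the intertwining hypothesis as an equality of operators, observe that in matched coordinates and frames the full symbols therefore coincide, and then invoke the reconstruction result (Theorems \ref{thm:1} and \ref{thm:2}) to conclude that the metrics and connections agree on the boundary. Your version is more explicit about the bookkeeping (transporting coordinates and frames, separating the cases $n\ge3$ and $n=2$), but the underlying idea is identical to the paper's short proof.
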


\begin{proof}
Clearly, the isomorphism $\phi$ intertwining the DtN operators $\Lambda_{\Sigma_{1}}$
and $\Lambda_{\Sigma_{2}}$ is equivalent to having the equality of
operators 
\[
\Lambda_{\Sigma_{1}}\left(s\right)=\phi^{-1}\circ\Lambda_{\Sigma_{2}}\left(\phi\circ s\circ\psi^{-1}\right)\circ\psi.
\]
The operator on the right hand side is a natural pull-back of the
operator $\Lambda_{\Sigma_{2}}$ along $\phi$. Therefore, the metric
and connection on $\left(E_{1},\Sigma_{1}\right)$ reconstructed from
its full symbol are equal to $\psi^{*}g_{2}$ and $\phi^{*}\nabla^{2}$,
respectively. On the other hand the full symbols of the above two
operators coincide. Hence, we have $g_{1}=\psi^{*}g_{2}$ and $\nabla^{1}=\phi^{*}\nabla^{2}$,
which completes the proof.
\end{proof}

\subsection{The case of surfaces}

In two dimensions, we can only aim to reconstruct a conformal class
of metrics from the DtN operator. This is because the connection Laplacian
is conformally contravariant in two dimensions. Indeed, from the definition
of the connection Laplacian we have
\[
\triangle_{e^{\mu}g}^{E}=e^{-\mu}\triangle_{g}^{E},
\]
where the subscript indicates the metric used to define the connection
Laplacian. Clearly, $\triangle_{g}^{E}u=0$ if and only if $\triangle_{e^{\mu}g}^{E}u=0$.
The unit normal vector at the boundary for the conformal metric is
equal to 
\[
\left.e^{-\nicefrac{\mu}{2}}\right|_{\partial N}\cdot\frac{\partial}{\partial\nu}.
\]
Therefore, we have the following identity for the DtN operators 
\[
\Lambda_{e^{\mu}g,\nabla^{E}}=\left.e^{-\nicefrac{\mu}{2}}\right|_{\partial N}\cdot\Lambda_{g,\nabla^{E}}.
\]
This identity shows that the DtN operators constructed using conformal
metrics coincide if a conformal factor $e^{\mu}$ equals to $1$ at
the boundary. This fact poses obstacles to the recovery of the normal
derivatives of the geometric data at the boundary of surfaces. However,
we can still recover the metric and the connection on the boundary
from the symbol of the DtN operator. This follows immediately from
the proof of the Theorem \ref{thm:1}.

\end{document}